\documentclass[a4paper,12pt]{amsart}

\usepackage{amsmath,amssymb, amsfonts}
\usepackage{epic}
\usepackage{pgfcore}
\usepackage{graphicx}

\textheight 22cm \textwidth 15cm \voffset=-0.5cm
\oddsidemargin=0.5cm \evensidemargin=0.5cm \topmargin=-0.5cm
\allowdisplaybreaks \setcounter{page}{1}

\newtheorem{thm}{Theorem}[section]

\newtheorem{lem}[thm]{Lemma}

\newtheorem{rem}[thm]{Remark}


\author{Hafedh Bousbih}
\address{Universit\'e de Sousse, Institut Sup\'erieur des Sciences Appliqu\'ees et de Technologie de Sousse, Cit\'e Ibn Khaldoun,  Sousse 4003, Tunisie.}\email{hafedh.bousbih@gmail.com}


\author{Mohamed Majdoub}
\address{ Universit\'e de Tunis El Manar, Facult\'e des Sciences de Tunis, LR03ES04 \'Equations aux d\'eriv\'ees partielles et applications, 2092 Tunis, Tunisie.} \email{mohamed.majdoub@fst.rnu.tn}
\thanks{M. M. is grateful to the Laboratory of
PDE and Applications at the Faculty of Sciences of Tunis.}


\title[Shear Thickening Fluids of Second Grade]{Existence and Uniqueness of Strong Solution for Shear Thickening Fluids of Second Grade}
\date{\today}


\begin{document}


\begin{abstract}
In this paper we study the equations governing the unsteady motion of an incompressible homogeneous  generalized second grade fluid subject to periodic boundary conditions.
We establish the existence of global-in-time strong  solutions for shear thickening flows in the two and three dimensional case. We also prove uniqueness of such solution  without any smallness condition on the initial data or restriction on the material moduli.
\end{abstract}


\subjclass[2000]{76B03, 35L60.}
\keywords{Non-Newtonian fluids; generalized second grade fluids; shear rate  dependent viscosity; existence; uniqueness; strong solution.}


\maketitle

\tableofcontents

\section{Introduction}
The theory and the applications of non-Newtonian fluids have attracted the attention of many scientists for a long time since they  are more appropriate than
Newtonian fluids in many areas of engineering sciences such as geophysics, glaciology, colloid mechanics, polymer mechanics, blood and food rheology, etc.
Fluids with shear dependent viscosity, which can exhibit shear thinning and shear thickening and include the power-law
fluids as a special case, constitute a large class of non-Newtonian fluids. For instance, see \cite{MNR}, \cite{malekraja} and \cite{malekraja07} for  more detailed discussions. \\
There are many models of non-Newtonian fluids which have recently become to be of great interest. Among these models one can cite fluids of differential type. The second grade fluids, which are a subclass of them, have been successfully investigated  in various kinds of flows of  materials such as oils, greases, blood, polymers, suspensions,
and liquid crystals. In the classical incompressible fluids of second grade, it is customary to assume that the Cauchy stress tensor $\mathbf{T}$ is related to the velocity gradient $\nabla u$ and its symmetric part $\mathbf{D} u$ through the relation
\begin{eqnarray}\label{classic cauchy tensor}
\mathbf{T}=-p \mathbf{I} +  \mu \mathbf{A}_{1} + \alpha_{1} \mathbf{A}_{2} +\alpha_{2} \mathbf{A}_{1}^{2},
\end{eqnarray}
where $p$ is the indeterminate part of the stress due to the constraint of incompressibility, $\mu$ is the kinematic  viscosity, and $\alpha_{1}$  and
$\alpha_{2}$ are material moduli which are usually referred to as the normal stress coefficients. The kinematical tensor $\mathbf{A}_{1}$ and $\mathbf{A}_{2}$
are the first and the second Rivlin-Ericksen tensor, respectively, and they are defined through
\begin{eqnarray}
\label{a1}\mathbf{A}_{1}&=&  \nabla u +\nabla u^{t}= 2 \: \mathbf{D} u  = \nabla u + (\nabla u)^{t} ,\\
\label{a2}\mathbf{A}_{2}&=& \frac{d}{dt}\mathbf{A}_{1} + \mathbf{A}_{1} \nabla u + \nabla u^{t} \mathbf{A}_{1},
\end{eqnarray}
 where $ u =( u_{1}, ..., u_{d})$ and  $\nabla u :=(\partial_{j} u_{i})_{1\leq i, j \leq d}$  ($d=2$ or $3$ is the space dimension) denote the velocity vector field and its gradient. Here $\partial_{j} u_{i}$ stands for the partial space derivative of $u_{i}$ with respect to $x_{j}$, $i, j=1,...,d$. The material derivative is given by
\begin{eqnarray}\label{mater deriv}
\frac{d}{dt}(.)= \partial_{t} (.) + (u. \nabla) (.),
\end{eqnarray}
where $\partial_{t}$ is the partial derivative with respect to time and $(u. \nabla)$ the differential operator with respect to the spatial variable defined by $ u. \nabla= \sum_{i=1}^{d} u_{i} \partial_{i}$.\\
Indeed, in \cite{dunn}, Dunn   studied the thermodynamics and stability of second-grade fluids with viscosity $\mu$ depending on the shear rate $|\mathbf{D} u|$ (i.e. the Euclidean norm of the symmetric part of the velocity gradient defined by   $|\mathbf{D}u|= \sqrt{Tr ( \mathbf{D}u)^{2}} $)  and showed that if the fluid is to be compatible with thermodynamics in the sense that all motions of the fluid meet the
Clausius-Duhem inequality and the assumption that the specific Helmholtz free energy of the fluid be minimum in
equilibrium, then $\mu,\; \alpha_{1}$ and $\alpha_{2}$ in (\ref{classic cauchy tensor}) must verify
\begin{eqnarray}\label{firstduncond}
\sqrt{\dfrac{3}{2}} \frac{\mu (|\mathbf{D} u |)}{|\mathbf{D} u | } \leq  \alpha_{1} + \alpha_{2} \leq \sqrt{\dfrac{3}{2}} \frac{\mu (|\mathbf{D} u |)}{|\mathbf{D} u | }.
 \end{eqnarray}
In the  1980s, Man, Kjartanson and coworkers \cite{msksun} and \cite{KSDM} showed  that polycrystalline ice in creeping
flow under pressuremeter tests can be modeled as an incompressible second-grade fluid with the viscosity depending on the shear rate. The constitutive
equation proposed by Man and coworkers, which leads to well-posed initial-boundary-value problems in nonsteady channel
flow \cite{man} and can also model the flow of ice in triaxial creep tests \cite{mansun}, is
\begin{eqnarray}\label{general cauchy tensor}
\mathbf{T}= -p \mathbf{I} + \mu (|\mathbf{D} u |) \mathbf{A}_{1} + \alpha_{1} \mathbf{A}_{2} +\alpha_{2} \mathbf{A}_{1}^{2},
\end{eqnarray}
where $\mu (|\mathbf{D} u |) = \nu |\mathbf{D} u |^{m}$, with $-1 < m < \infty$ and $\nu$ is a material constant.\\
Man and Massoudi \cite{manmas} conducted thermodynamic studies on some classes of generalized second-grade  fluids, which include the class defined by (\ref{general cauchy tensor}). For the case $m > 1$ of this class, they showed that the viscosity function $\mu$ and the
normal-stress coefficients  must verify the inequality
\begin{eqnarray}\label{duncond}
 \mu \geq 0,\; \;\;\;\; \alpha_{1} + \alpha_{2} = 0, \;\;\;\;\;and \;\; \alpha_{1}\geq  0
 \end{eqnarray}
as restrictions imposed by thermodynamics and the assumption that the specific Helmholtz free energy of the fluid be minimum in equilibrium.\\
In this paper we study a class of generalized second-grade fluids with constitutive equation given by (\ref{general cauchy tensor}), which has its normal-stress coefficients satisfy the equation $\alpha_1 +\alpha_2 = 0$ and has a viscosity function $\mu (|\mathbf{D} u |)$ different than
that proposed by Man and coworkers. The mathematical assumptions on the viscosity function that we adopt will be given in Section 3. For the class of
generalized second-grade fluid in question, the thermodynamic restrictions $(\ref{duncond})_1$ and $(\ref{duncond})_3$ are valid \cite{dunn}.\\
The interested reader can find more about the literature of the generalized fluids of second grade and their applications,
for example in \cite{manmas}, \cite{massoudivaidya2008} and \cite{Raja84}. Henceforth we will adopt (\ref{duncond})$_{2}$ and (\ref{duncond})$_{3}$ as assumptions.
Based on these assumptions and the relations (\ref{a1})-(\ref{a2}),
one can deduce that the stress tensor (\ref{general cauchy tensor}) for generalized fluids of second grade could be written in the following form
\begin{eqnarray}\label{general cauchy tensor1}
\mathbf{T}=-p \mathbf{I} &+&  2 \mu (|\mathbf{D} u|) \mathbf{D} u  \nonumber\\
&+& \alpha_{1} [ \partial_{t} (2 \mathbf{D} u) + 2 ( u. \nabla) \mathbf{D} u  + (\nabla u)^{t} \nabla u - \nabla u (\nabla u)^{t}].\;
\end{eqnarray}
Let us mention that by  ``generalized second grade fluid" we mean a fluid of second grade of type (\ref{general cauchy tensor}) (i.e. whose viscosity is a non linear function of the shear rate)  as it will be  considered in this study, and by word `` classical second grade fluid" a second grade fluid (\ref{classic cauchy tensor}) (i.e. whose viscosity is constant).\\
This work is devoted to the mathematical analysis of the  equations governing the flow of a  homogeneous incompressible fluid
 which occupies all space $\mathbb{R}^d (d = 2; 3)$, the Cauchy stress of which is given by formula (\ref{general cauchy tensor1}),
and the velocity of which is $2 \pi$-periodic. We will also assume that the kinematic viscosity $\mu(|\mathbf{D}u|)$ is a nonlinear function of $|\mathbf{D}u|$. The stress tensor $ 2 \mu(|\mathbf{D}u|) \mathbf{D} u $ will be denoted by $\mathbf{S}(\mathbf{D}u)$. Next, we derive the equations modeling our studied system.\\
Let $T$ be an arbitrary positive real number. We set  $[0,T]$ as the time interval and $\Omega : = (0, 2\pi)^{d}$  the  periodic box of dimension $d$. Since the fluid body is taken as incompressible and homogeneous in material, the density of the fluid can be put equal to 1. Then the  momentum equation is expressed as
\begin{eqnarray}\label{momentum}
\frac{du}{dt}= div \, \mathbf{T} + f,
\end{eqnarray}
with the condition of incompressibility
\begin{eqnarray}\label{div oh}
div \, u =0.
\end{eqnarray}
Therefore, with the help of  relation (\ref{mater deriv}), inserting  the expression  (\ref{general cauchy tensor1}) and expanding,    equation (\ref{momentum}) becomes
\begin{eqnarray} \label{equa mere}
\displaystyle
\partial_{t} v + (u \, .\,\nabla) v  - div ( \mathbf{S}(\mathbf{D} u))
+ \sum_{j=1}^{d} v_{j} \nabla u_{j} = -\nabla p + f,\;\;\;\;  v = u  -\alpha_{1}\Delta u.
\end{eqnarray}
One can look at \cite{busuioc1} and more precisely at \cite{busuioc2} to verify that the computations in (\ref{equa mere}), holds true. See also \cite{BLFNL} and \cite{iftimie} where (\ref{equa mere}) is the adopted form for their analysis. These equations come with the  initial data  prescription
\begin{eqnarray}\label{initial cond}
u (0,.)=  u_{0}\in \mathbb{R}^{d},\;\; \mbox{and}\; \;\;div\, u_{0} = 0,
\end{eqnarray}
where $u_{0}$ denotes the initial velocity vector field. We also require that the velocity and the pressure satisfy
\begin{eqnarray}\label{per bound}
\begin{array}{rl}
u_{i}, p :[0,T]\times \mathbb{R}^{d}\longrightarrow  \mathbb{R}\:&\;\;\mbox{are}\;\;\: 2 \pi-\mbox{periodic with respect to}\;\;\; x_{i}, \\
\displaystyle\int_{\Omega} u_{i}\: dx = 0, &\;\displaystyle \int_{\Omega} p \: dx =0, \;  i=1,...,d.
\end{array}
\end{eqnarray}
Recall that the  zero mean value request is  to be assumed for the Poincar\'{e}'s inequality. Note that we are looking for $u:[0,T]\times \mathbb{R}^{d}\longrightarrow \mathbb{R}^{d}$ and $p :[0,T]\times \mathbb{R}^{d}\longrightarrow \mathbb{R}$ that solve the system of equations made up of (\ref{div oh}), (\ref{equa mere}) (\ref{initial cond}) and (\ref{per bound}). For an explanation of the  phenomena of unsteadiness of the considered flow, see  \cite{massoudiphuoc2009}.\\
Before proceeding further, let us clarify some expressions involved in the momentum equation (\ref{equa mere}). Recall that $u$ is  divergence free. Then, in the distributional sense, the $i$ component of the convective term $(u \, .\,\nabla) v$ is defined by
\begin{eqnarray}\label{exp 1}
((u \, .\,\nabla) v)_{i} &=& \sum_{j=1}^{d} u_{j} \partial_{j} v_{i} = \sum_{j=1}^{d} u_{j} \partial_{j} u_{i} - \alpha_{1} \sum_{j,k=1}^{d} u_{j} \partial_{j} \partial_{k k}^{2} u_{i} \nonumber \\
&= &\sum_{j=1}^{d} u_{j} \partial_{j} u_{i} - \alpha_{1} \sum_{j, k =1}^{d}\partial_{j} (u_{j} \partial_{k k}^{2} u_{i})\nonumber\\
&= &\sum_{j=1}^{d} u_{j} \partial_{j} u_{i} - \alpha_{1} \sum_{j, k =1}^{d}\partial_{j k}^{2} (u_{j} \partial_{k} u_{i})
+  \alpha_{1} \sum_{j, k =1}^{d}\partial_{j} ( \partial_{k} u_{j} \partial_{k} u_{i}).\;\;\;
\end{eqnarray}
Analogously the $i$ component (i.e. the partial derivative) of $\sum_{j=1}^{d} v_{j} \nabla u_{j}$ can explicitly be  written as
\begin{eqnarray}\label{exp 2}
(\sum_{j=1}^{d} v_{j} \nabla u_{j})_{i} &=& \sum_{j=1}^{d} u_{j} \partial_{i} u_{j}  - \alpha_{1} \sum_{j, k =1}^{d}  \partial_{k k}^{2} u_{j} \partial_{i} u_{j}\nonumber\\
 &= &\sum_{j=1}^{d} \frac{1}{2} \partial_{i} |u_{j}|^{2} - \alpha_{1} \sum_{j, k =1}^{d} \partial_{k}( \partial_{k}  u_{j} \partial_{i}  u_{j})
 +  \alpha_{1} \sum_{j, k =1}^{d}\partial_{k} u_{j} \partial_{i k}^{2} u_{j}\nonumber\\
&= &\sum_{j=1}^{d}  \frac{1}{2} \partial_{i} (|u_{j}|^{2} + \alpha_{1}|\nabla u_{j}|^{2}) - \alpha_{1} \sum_{j, k =1}^{d}\partial_{k}( \partial_{k}  u_{j} \partial_{i}  u_{j}).\;\;\;
\end{eqnarray}
Therefore (\ref{equa mere}) can be differently expressed  in the following form
\begin{eqnarray} \label{equa mere 2}
\displaystyle
\partial_{t} v + (u \, .\,\nabla) u  - div ( \mathbf{S}(\mathbf{D} u))
- \alpha_{1} \sum_{j,k=1}^{d} \partial_{j k } ( u_{j} \partial_{k} u) +  \alpha_{1} \sum_{j,k=1}^{d} \partial_{j} ( \partial_{k} u_{j} \partial_{k} u)& &\nonumber \\
- \alpha_{1} \sum_{j,k=1}^{d} \partial_{k} ( \partial_{k} u_{j} \nabla u_{j} u)= -\nabla ( p  +  \frac{1}{2}  |u|^{2} + \frac{\alpha_{1}}{2} |\nabla u|^{2})) +  f.& &\;\;\;
\end{eqnarray}
The rest of the paper is planed as follows. In section 2, we introduce  notations and functions spaces. We also outline some technical results that will be used later. In section 3, we set up the assumptions imposed on the stress tensor $\mathbf{S}$. Moreover, we present the resulted properties that will allow us to prove our main result. We also give  a brief explanation of the studied problem
and the considered type of viscosities. Later, we  state the main theorem and relate it with some previous results that are concerned with  the classical second grade fluids and those with shear-rate  dependent viscosities. In section 4, we give the proof of the existence part of our result. Finally, in section 5 we show the uniqueness of the constructed solution.\\
The main feature of the proof  is based  on a  construction of a sequence of approximated
solutions using a discretization in  space of Galerkin's type and  a limit process
using compactness arguments in order to control the nonlinear terms.  In addition, we will also make sense of the weak formulation introduced in the main theorem and conclude by showing that such a solution is  unique via Gronwall's lemma.
\section{Notations and auxiliary results}
Since we deal with a spatial periodic problem on $\mathbb{R}^{d}$, then functions defined on $[0,T]\times \mathbb{R}^{d}$
can be considered as defined on  $[0,T]\times \Omega $. So, we introduce the following spaces. The space $\mathcal{C}^{\infty}_{per}(\Omega)$ consists of
all smooth $2\pi$-periodic functions at each direction $ x_{i}, i=1,...,d$. For $k\in \mathbb{N}$ and  $1\leq q<\infty$, the Lebesgue space $L^{q}_{per}(\Omega)$
(respectively $W^{k,q}_{per}(\Omega)$) are introduced as the closure of $\mathcal{C}^{\infty}_{per}(\Omega)$ in the norm $\|.\|_{L^{q}}$ (respectively
$\|\nabla^{k} .\|_{L^{q}}$) and  having zero mean value $\int_{\Omega} f(x) \: dx =0$, where $\|f \|_{L^{q}}^{q}= \int_{\Omega}|f (x)|^{q}\:dx $ and
$\|\nabla^{k} f \|_{L^{q}}^{q}=\int_{\Omega}|\nabla^{k} f (x)|^{q}\:dx$ are the respective norms.
Here $\nabla^{k}$ denotes the space gradient of order $k$. The spaces   $L^{q}_{per, div}(\Omega)$ (respectively $W^{k,q}_{per, div}(\Omega)$) stand for  the set of functions belonging to $L^{q}_{per}(\Omega)$
 (respectively $W^{k,q}_{per}(\Omega)$) with  zero divergence. Note that these spaces may also be defined as the closure of $\mathcal{C}^{\infty}_{per}(\Omega)$ with zero divergence and zero mean value with respect to the corresponding norms.\\
The scalar product in $L^{2}_{per}$ will be denoted  by $(.,.)$ and that in $W^{k,2}_{per}$ by $((.,.))_{k}$.
The scalar product of vectors $ u=(u_{1}, ..., u_{d})$ and $ v=(v_{1},..., v_{d})$ is denoted by $u . v= \sum_{i=1}^{d} u_{i}v_{i}$ while that of tensors $\mathbf{B}=(\mathrm{B}_{i j})_{1\leq i,j\leq d}$ and $\mathbf{D}=(\mathrm{D}_{i j})_{1\leq i,j\leq d}$ is denoted by  $\mathbf{B}:\mathbf{D}= \sum_{i,j=1}^{d}\mathrm{B}_{i j}\mathrm{D}_{i j}$.\\
Given a Banach space $X(\Omega)$ then $X^{*}(\Omega)$ stands for its dual space. We will not distinguish between  spaces of scalar, vector and tensor valued functions as one can easily make difference of sense between them. We  denote by $L^{q}(0; T;X(\Omega)) $ the usual  Bochner space consisting of functions which values  are in $X$ and are  $L^{q}$ time-integrable  over $(0,T)$ and by $L^{q'}(0,T;X^{*}(\Omega))$ its dual where $q'$ is the conjugate exponent of $q$ given by $q'=\frac{q}{q-1}$. On the other hand, $C([0,T];X(\Omega))$ stands for the space  of continuous in time  functions in $[0,T]$ and  with values in $X$. We also denote by $C_{w}(0,T;X(\Omega))$ the space of functions $u$ which are in $L^{\infty}(0,T;X(\Omega))$ and continue for almost every $t \in [0,T]$ for the weak topology of $X(\Omega)$.\\
Throughout the paper, if we denote by $c$ a positive constant with neither  any subscript nor superscript then $c$ is considered as a generic constant whose value can change from line to line in the inequalities and depends on the parameters in question but has no effect on the solution. On the other hand, we will denote in a bold character tensor functions and in the  usual one vector valued and scalar functions.\\
In the sequel of this section, we review some  technical results and classical  lemmas on compactness arguments and limiting process.\\
Since we are dealing with space periodic and divergence-free vector fields, one can enjoy some special identities that will be summarized in the following lemma.
\begin{lem}
Let $f: \Omega \longrightarrow \mathbb{R}$ and $u,\: v : \Omega \longrightarrow \mathbb{R}^{d}$ be periodic functions. Suppose that  $u$ and $v$ are divergence-free, then the following identities hold
\begin{eqnarray}
\label{diver1}\int_{\Omega} \nabla f \,.\, u \: dx &=& 0, \\
\label{diver2}\int_{\Omega} (u .\nabla) v \,. \,v \: dx &=& 0, \\
\label{diver3}\int_{\Omega} (u .\nabla) u\, . \,v \: dx &=& - \int_{\Omega} (u \otimes u) . \nabla v \: dx.
\end{eqnarray}
\end{lem}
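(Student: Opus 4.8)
The plan is to obtain all three identities by integration by parts on the torus $\Omega=(0,2\pi)^d$, where $2\pi$-periodicity guarantees that every boundary term vanishes, combined with the incompressibility constraint $\mathrm{div}\,u=0$; it is cleanest to verify them first for $u,v\in\mathcal{C}^\infty_{per}(\Omega)$ and then to extend by density. For (\ref{diver1}), I would write $\int_{\Omega}\nabla f\cdot u\,dx=\sum_{i=1}^d\int_{\Omega}(\partial_i f)\,u_i\,dx$ and integrate by parts in the variable $x_i$; the periodic boundary contributions cancel, leaving $-\sum_{i=1}^d\int_{\Omega}f\,\partial_i u_i\,dx=-\int_{\Omega}f\,\mathrm{div}\,u\,dx=0$.

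Identity (\ref{diver2}) then follows immediately: since $u$ is divergence-free one has $(u.\nabla)v\cdot v=\sum_{i,j}u_j(\partial_j v_i)v_i=\sum_j u_j\,\partial_j\!\left(\tfrac12|v|^2\right)=\nabla\!\left(\tfrac12|v|^2\right)\cdot u$, so the integral $\int_{\Omega}(u.\nabla)v\cdot v\,dx$ vanishes by (\ref{diver1}) applied with $f=\tfrac12|v|^2$. For (\ref{diver3}), I would write $\int_{\Omega}(u.\nabla)u\cdot v\,dx=\sum_{i,j}\int_{\Omega}u_j(\partial_j u_i)v_i\,dx$, integrate by parts in $x_j$, and use $\mathrm{div}\,u=0$: this yields $-\sum_{i,j}\int_{\Omega}u_i\,\partial_j(u_j v_i)\,dx=-\int_{\Omega}|u|^2\,\mathrm{div}\,u\,dx-\sum_{i,j}\int_{\Omega}u_i u_j\,\partial_j v_i\,dx=-\int_{\Omega}(u\otimes u).\nabla v\,dx$, with the conventions $(u\otimes u)_{ij}=u_i u_j$ and $(\nabla v)_{ij}=\partial_j v_i$.

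The only point needing a word of care is the final density argument: each integrand above is multilinear and the corresponding integral is continuous for the norms of the spaces in which $u$ and $v$ actually live (for instance $u\otimes u\in L^1$ as soon as $u\in L^2$, paired against $\nabla v$ in the conjugate exponent), so one approximates $u$ and $v$ by smooth periodic divergence-free fields and passes to the limit, preserving the identities. I do not anticipate any genuine obstacle; once periodicity is invoked to discard the boundary terms the computation is entirely elementary.
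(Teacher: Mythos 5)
Your argument is correct and follows essentially the same route as the paper, which simply invokes Green's formula, the divergence-free constraint and the vanishing of boundary terms by periodicity, referring to Lemma 2.9 of \cite{MNRR} for the details that you have written out. (One harmless slip: in your derivation of (\ref{diver3}) the term generated by $\mathrm{div}\,u$ should be $-\int_{\Omega}(u\cdot v)\,\mathrm{div}\,u\,dx$ rather than $-\int_{\Omega}|u|^{2}\,\mathrm{div}\,u\,dx$, but it vanishes in either form since $\mathrm{div}\,u=0$.)
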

\begin{proof}
The proof is based on the use of Green's formula, the divergence-free property and the fact that boundary terms vanish due to the periodicity. For details, one can see Lemma 2.9 in \cite{MNRR}.
\end{proof}

The famous Aubin-Lions lemma will play an important role in the sequel.
\begin{lem}[Aubin-Lions]\label{aubin lions}
Let $1<\alpha<\infty$, $1\leq \beta\leq \infty$ and $X_{0},X_{1}, X_{2}$ be  Banach reflexive separable spaces such that
$$ X_{0}\hookrightarrow\hookrightarrow X_{1} \;\; and \;\;  X_{1}\hookrightarrow X_{2}.$$
Then $$\left\{u \in L^{\alpha}(0,T;X_{0});\partial_{t} u \in L^{\beta}(0,T;X_{2})\right\}\hookrightarrow\hookrightarrow
L^{\alpha}(0,T;X_{1}).$$
\end{lem}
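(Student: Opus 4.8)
The plan is to deduce the compact embedding from two standard ingredients: an interpolation (Ehrling-type) inequality that encodes the compactness $X_{0}\hookrightarrow\hookrightarrow X_{1}$, together with an equicontinuity-in-time estimate that comes from the bound on $\partial_t u$ in $L^{\beta}(0,T;X_{2})$. First I would prove \emph{Ehrling's lemma}: for every $\varepsilon>0$ there is $C_{\varepsilon}>0$ such that
$$\|v\|_{X_{1}}\le \varepsilon\,\|v\|_{X_{0}}+C_{\varepsilon}\,\|v\|_{X_{2}}\qquad\text{for all }v\in X_{0}.$$
This is argued by contradiction: if it failed for some $\varepsilon_{0}$, one obtains a sequence $(v_{k})\subset X_{0}$ with $\|v_{k}\|_{X_{1}}=1$ and $\|v_{k}\|_{X_{1}}>\varepsilon_{0}\|v_{k}\|_{X_{0}}+k\|v_{k}\|_{X_{2}}$; then $(v_{k})$ is bounded in $X_{0}$, hence has a subsequence converging in $X_{1}$ by the compact embedding, while $\|v_{k}\|_{X_{2}}\to 0$ forces the $X_{1}$-limit to be $0$, contradicting $\|v_{k}\|_{X_{1}}=1$.

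Next, let $(u_{n})$ be bounded in $\mathcal{W}:=\{u\in L^{\alpha}(0,T;X_{0}):\partial_t u\in L^{\beta}(0,T;X_{2})\}$, say $\|u_{n}\|_{L^{\alpha}(0,T;X_{0})}\le M$. Since $X_{0}$ is reflexive and $1<\alpha<\infty$, $L^{\alpha}(0,T;X_{0})$ is reflexive, so along a subsequence $u_{n}\rightharpoonup u$ in $L^{\alpha}(0,T;X_{0})$ and (using reflexivity/separability of $X_{2}$) $\partial_t u_{n}\rightharpoonup\partial_t u$ in $L^{\beta}(0,T;X_{2})$, weak-$*$ if $\beta=\infty$. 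Replacing $u_{n}$ by $u_{n}-u$ (still bounded in $\mathcal{W}$), I may assume $u=0$, and the goal becomes $u_{n}\to 0$ strongly in $L^{\alpha}(0,T;X_{1})$. Applying Ehrling's inequality pointwise in $t$ and taking $L^{\alpha}$ norms,
$$\|u_{n}\|_{L^{\alpha}(0,T;X_{1})}\le \varepsilon\,M+C_{\varepsilon}\,\|u_{n}\|_{L^{\alpha}(0,T;X_{2})},$$
so it suffices to prove $\|u_{n}\|_{L^{\alpha}(0,T;X_{2})}\to 0$.

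For this I would use a time-averaging argument. Since $X_{0}\hookrightarrow X_{2}$ continuously, each $u_{n}$ belongs to $W^{1,1}(0,T;X_{2})\hookrightarrow C([0,T];X_{2})$, and for $s\le t$ in $[0,T]$,
$$\|u_{n}(t)-u_{n}(s)\|_{X_{2}}\le\int_{s}^{t}\|\partial_{\tau} u_{n}(\tau)\|_{X_{2}}\,d\tau\le |t-s|^{1/\beta'}\,\|\partial_t u_{n}\|_{L^{\beta}(0,T;X_{2})},$$
with the obvious modification when $\beta=\infty$. Fixing $h>0$ small and, for each $t$, an interval $I_{h}(t)\subset[0,T]$ of length $h$ with $t\in I_{h}(t)$, I split
$$u_{n}(t)=\frac1h\int_{I_{h}(t)}u_{n}(\tau)\,d\tau+\frac1h\int_{I_{h}(t)}\bigl(u_{n}(t)-u_{n}(\tau)\bigr)\,d\tau.$$
The last term has $X_{2}$-norm $\le C h^{1/\beta'}$, uniformly in $n$ and $t$. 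The averaged term, for fixed $h$, is bounded in $X_{0}$ by $C h^{-1/\alpha}M$ (Hölder), hence relatively compact in $X_{1}$, and it converges weakly to $0$ in $X_{0}$ as $n\to\infty$ (test $u_{n}\rightharpoonup 0$ against the functional $u\mapsto h^{-1}\int_{I_{h}(t)}\langle\phi,u(\tau)\rangle\,d\tau$, $\phi\in X_{0}^{*}$); therefore it converges \emph{strongly} to $0$ in $X_{1}$, and hence in $X_{2}$. Consequently $\limsup_{n}\|u_{n}(t)\|_{X_{2}}\le C h^{1/\beta'}$ for every $h>0$, i.e. $u_{n}(t)\to 0$ in $X_{2}$ for each $t$; the same splitting also gives $\|u_{n}\|_{L^{\infty}(0,T;X_{2})}\le C h^{-1/\alpha}M+C h^{1/\beta'}$ independently of $n$. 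Dominated convergence on the finite interval $(0,T)$ then yields $\|u_{n}\|_{L^{\alpha}(0,T;X_{2})}\to 0$, completing the proof. (When $\beta=1$ the remainder term above is no longer uniform in $n$, and one needs the sharper compactness criterion of J.\ Simon, which I would simply cite.)

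The heart of the argument — and the only genuinely non-soft step — is the treatment of the averaged term: converting the mere weak convergence $u_{n}\rightharpoonup 0$ in $L^{\alpha}(0,T;X_{0})$ into \emph{strong} convergence of the time-averages in $X_{1}$. This is precisely where the compactness of $X_{0}\hookrightarrow\hookrightarrow X_{1}$ is indispensable; the rest is Hölder's inequality, the fundamental theorem of calculus in $X_{2}$, and dominated convergence.
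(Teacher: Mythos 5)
Your argument is essentially correct, but note that the paper does not prove this lemma at all: it simply records it as a known result and refers to Roub\'{\i}\v{c}ek and Simon for the proof (precisely because the borderline case $\beta=1$ is the delicate one). What you have reconstructed is the classical Lions--Temam style proof: Ehrling's interpolation inequality to reduce strong convergence in $L^{\alpha}(0,T;X_{1})$ to strong convergence in $L^{\alpha}(0,T;X_{2})$, then the time-averaging decomposition, with the averaged part handled by weak convergence in $X_{0}$ plus complete continuity of the compact embedding (which uses the reflexivity of $X_{0}$, as in the hypotheses), and the remainder controlled by $\|u_{n}(t)-u_{n}(s)\|_{X_{2}}\le |t-s|^{1/\beta'}\|\partial_t u_n\|_{L^{\beta}(0,T;X_{2})}$. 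All of these steps are sound, and the final dominated-convergence step is justified by the uniform $L^{\infty}(0,T;X_{2})$ bound you derive. The one genuine limitation is the one you yourself flag: when $\beta=1$ the modulus $|t-s|^{1/\beta'}$ degenerates and your equicontinuity estimate gives nothing, so the case $\beta=1$ (which the statement includes, and which is exactly the generality the paper emphasizes) is not covered by your argument and must be delegated to Simon's compactness theorem --- which is in effect what the paper does for the whole lemma. So your proposal buys a self-contained proof in the range $\beta>1$, at the cost of still citing the literature for the endpoint case, whereas the paper cites the literature outright; for the application in the paper (where the time derivative is bounded in $L^{2}$ or $L^{r/(r-1)}$ with $r\ge 3$, hence $\beta>1$) your self-contained version would actually suffice.
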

Here, the symbol $\hookrightarrow\hookrightarrow $ stands for the compact imbedding while $\hookrightarrow$ for the continuous one.
Lemma \ref{aubin lions} is a general version of the Aubin-Lions lemma valid under the fulfillment of  the  assumption $\beta=1$ and  is proved in \cite{roubicek} and \cite{Simon}, separately.
\begin{lem}[Vitali] \label{vitali}
Let $\Omega$ be a bounded domain in $\mathbb{R}^{d}$ and $f^{n}:\Omega\longrightarrow \mathbb{R}$ be a sequence of functions. Suppose that \\
- $\underset{n\rightarrow \infty}{\lim}f^{n}(x)$ exists and is finite for all $x\in \Omega$,\\
- for all $\varepsilon>0$, there exists $\delta>0$ such that
$$ \sup_{n\in \mathbb{N}} \int_{Q} |f^{n}(x)| \: dx < \varepsilon\;\;\; \forall Q \subset \Omega \;\;\;\mbox{with}\;\;\; |Q| <\delta.$$
Then
$$\underset{n\rightarrow \infty}{\lim}\int_{\Omega}f^{n}(x)\: dx = \int_{\Omega} \underset{n\rightarrow \infty}{\lim}f^{n}(x) \: dx.$$
\end{lem}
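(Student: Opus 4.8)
The plan is to deduce this version of Vitali's convergence theorem from Egorov's theorem, which is available precisely because $\Omega$, being a bounded domain, has finite Lebesgue measure. Throughout, write $f(x):=\lim_{n\to\infty}f^{n}(x)$, which exists and is finite for every $x\in\Omega$ by the first hypothesis.

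\textbf{First} I would check that $f\in L^{1}(\Omega)$ and that $\{f^{n}\}$ is bounded in $L^{1}(\Omega)$. Applying the uniform integrability hypothesis with $\varepsilon=1$ produces a $\delta>0$; since $|\Omega|<\infty$ one can partition $\Omega$ into finitely many measurable pieces $Q_{1},\dots,Q_{N}$, each of measure less than $\delta$ and $N$ of order $|\Omega|/\delta$, so that $\int_{\Omega}|f^{n}|\,dx=\sum_{k=1}^{N}\int_{Q_{k}}|f^{n}|\,dx<N$ for every $n$. Fatou's lemma then yields $\int_{\Omega}|f|\,dx\le\liminf_{n}\int_{\Omega}|f^{n}|\,dx\le N<\infty$, so the right-hand side of the claimed identity is a well-defined finite number.

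\textbf{Next}, fix $\varepsilon>0$ and let $\delta=\delta(\varepsilon)>0$ be the number given by the uniform integrability hypothesis. Since $\Omega$ has finite measure and $f^{n}\to f$ pointwise on $\Omega$, Egorov's theorem furnishes a measurable set $E\subset\Omega$ with $|\Omega\setminus E|<\delta$ such that $f^{n}\to f$ uniformly on $E$. I would then estimate
\[
\Bigl|\int_{\Omega}f^{n}\,dx-\int_{\Omega}f\,dx\Bigr|\le\int_{E}|f^{n}-f|\,dx+\int_{\Omega\setminus E}|f^{n}|\,dx+\int_{\Omega\setminus E}|f|\,dx .
\]
The first term is at most $|E|\,\sup_{x\in E}|f^{n}(x)-f(x)|$, which tends to $0$ by uniform convergence on $E$; the second term is strictly less than $\varepsilon$ for every $n$ directly from the uniform integrability hypothesis, since $|\Omega\setminus E|<\delta$; and the third term is bounded by $\liminf_{n}\int_{\Omega\setminus E}|f^{n}|\,dx\le\varepsilon$, again by Fatou's lemma applied on the subdomain $\Omega\setminus E$. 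Taking $\limsup$ in $n$ and then letting $\varepsilon\to0$ gives $\int_{\Omega}f^{n}\,dx\to\int_{\Omega}f\,dx$, which is the assertion; the argument in fact establishes the stronger convergence $\|f^{n}-f\|_{L^{1}(\Omega)}\to0$.

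The only ingredient that is not a one-line estimate is the appeal to Egorov's theorem to upgrade pointwise convergence to uniform convergence off a set of arbitrarily small measure — and this is exactly the place where the boundedness of $\Omega$ is used — so I expect that to be the crux, while the remaining bookkeeping with Fatou's lemma and the triangle inequality is routine. Alternatively, one may simply cite Vitali's convergence theorem from a standard reference in measure theory, the hypotheses stated here being precisely its classical assumptions.
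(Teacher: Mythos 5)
Your argument is correct: the Egorov--Fatou route (uniform convergence off a set of measure less than $\delta$, equi-integrability to control the exceptional set, Fatou to handle the limit function there) is the standard proof of Vitali's convergence theorem, and every step you give goes through on a bounded $\Omega$; your first step even shows the stronger conclusion $\|f^{n}-f\|_{L^{1}(\Omega)}\to 0$. The paper itself supplies no proof of this lemma --- it states it as a classical tool and only remarks that the second hypothesis is equi-integrability, implicitly deferring to standard measure-theory references --- so there is no competing argument to compare against; your write-up simply fills in the textbook proof that the authors take for granted. The only implicit assumption worth flagging is the measurability of the $f^{n}$ (needed for Egorov and for the integrals to make sense), which the lemma's statement leaves tacit.
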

We would like to note that  the second assumption stated in the lemma above expresses the equi-integrability of the sequence $\{f^{n}\}$. Recall that if
$\{f^{n}\}$ is uniformly bounded in some Lebesgue spaces $L^{q}(\Omega)$ for $q>1$ then they are equi-integrable functions.

We finish this section by the Korn's inequality whose a proof can be found in \cite{MNRR} (see Theorem 1.10 page 196).
\begin{lem}[Korn's Inequality]\label{korn}
Let   $q\in (1,\infty) $, then there exists a positive constant $c_{k}$ depending only on $\Omega$ and $q$ such that for all $u\in W_{per}^{1,q}(\Omega)$
$$c_{k}\|\nabla u\|_{L^{q}} \leq  \|\mathbf{D}u\|_{L^{q}}.$$
\end{lem}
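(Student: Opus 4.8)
The plan is to reduce the estimate to a Fourier multiplier bound. By density of $\mathcal{C}^{\infty}_{per}(\Omega)$ in $W^{1,q}_{per}(\Omega)$ it suffices to prove the inequality for a smooth mean--zero periodic field $u$, which I would expand as $u(x)=\sum_{k\in\mathbb{Z}^{d}\setminus\{0\}}\hat u(k)\,e^{ik\cdot x}$ (the term $k=0$ is absent since $\int_{\Omega}u\,dx=0$). From $\widehat{\partial_{\ell}u_{i}}(k)=ik_{\ell}\hat u_{i}(k)$ one gets $\widehat{(\mathbf{D}u)}_{i\ell}(k)=\tfrac{i}{2}\bigl(k_{\ell}\hat u_{i}(k)+k_{i}\hat u_{\ell}(k)\bigr)$, and taking the trace gives $\sum_{m}\widehat{(\mathbf{D}u)}_{mm}(k)=\widehat{\operatorname{div}u}(k)=i\sum_{\ell}k_{\ell}\hat u_{\ell}(k)$. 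Contracting the first relation with $k_{\ell}$ and eliminating $\sum_{\ell}k_{\ell}\hat u_{\ell}(k)$ by means of the trace identity yields, for every $k\neq 0$,
\begin{equation*}
\widehat{\partial_{j}u_{i}}(k)=\frac{2k_{j}}{|k|^{2}}\sum_{\ell=1}^{d}k_{\ell}\,\widehat{(\mathbf{D}u)}_{i\ell}(k)\;-\;\frac{k_{i}k_{j}}{|k|^{2}}\sum_{m=1}^{d}\widehat{(\mathbf{D}u)}_{mm}(k).
\end{equation*}
Hence each component $\partial_{j}u_{i}$ is recovered from the entries of $\mathbf{D}u$ by Fourier multiplier operators whose symbols $M(k)$ are rational, homogeneous of degree $0$, and smooth on $\mathbb{R}^{d}\setminus\{0\}$ --- in fact compositions of at most two Riesz transforms.

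The second step is to invoke boundedness of such operators on $L^{q}_{per}(\Omega)$. For $q=2$ this is immediate from Plancherel's identity together with the pointwise bound $|M(k)|\le c$. For general $q\in(1,\infty)$ one applies the Mikhlin--H\"ormander multiplier theorem in its periodic form (obtained, for instance, from the Euclidean statement via the de Leeuw transference principle, or directly from the Marcinkiewicz multiplier theorem for Fourier series), which gives $\|\partial_{j}u_{i}\|_{L^{q}}\le c\,\|\mathbf{D}u\|_{L^{q}}$ for each pair $(i,j)$ with $c=c(\Omega,q)$. Summing over $i,j$ produces $\|\nabla u\|_{L^{q}}\le c_{k}^{-1}\|\mathbf{D}u\|_{L^{q}}$, which is the claimed inequality. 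I expect the only genuinely non--elementary ingredient here to be the periodic multiplier theorem; the rest is linear algebra in frequency and the density argument.

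An alternative route avoiding harmonic analysis machinery rests on the pointwise identity $\partial^{2}_{\ell m}u_{i}=\partial_{\ell}(\mathbf{D}u)_{im}+\partial_{m}(\mathbf{D}u)_{i\ell}-\partial_{i}(\mathbf{D}u)_{\ell m}$, which shows $\nabla(\partial_{j}u_{i})\in W^{-1,q}_{per}(\Omega)$. Combined with Ne\v{c}as' (Lions') lemma on the torus, $\|f\|_{L^{q}}\le c\bigl(\|f\|_{W^{-1,q}}+\|\nabla f\|_{W^{-1,q}}\bigr)$ applied to $f=\partial_{j}u_{i}$, this first gives $\|\nabla u\|_{L^{q}}\le c(\|\mathbf{D}u\|_{L^{q}}+\|u\|_{L^{q}})$; a standard Peetre--Tartar compactness argument then absorbs the lower--order term, using the compact embedding $W^{1,q}_{per}\hookrightarrow\hookrightarrow L^{q}_{per}$ and the fact that $\mathbf{D}u=0$ forces $u\equiv 0$ on the mean--zero periodic space (a periodic rigid motion must be constant). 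In either approach the functional--analytic input (the periodic multiplier theorem, respectively Ne\v{c}as' lemma together with the compactness step) is the main obstacle, everything else being routine. Of course, one may simply quote Theorem 1.10 of \cite{MNRR}.
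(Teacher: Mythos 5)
Your proposal is correct, and it does strictly more than the paper, which offers no argument of its own for Lemma \ref{korn}: the text simply points to Theorem 1.10, page 196, of \cite{MNRR} (your closing remark "one may simply quote Theorem 1.10" is exactly the paper's route). Your frequency-side identity checks out: contracting $\widehat{(\mathbf{D}u)}_{i\ell}(k)=\tfrac{i}{2}\bigl(k_{\ell}\hat u_{i}+k_{i}\hat u_{\ell}\bigr)$ with $k_{\ell}$ gives $\tfrac{i}{2}|k|^{2}\hat u_{i}=\sum_{\ell}k_{\ell}\widehat{(\mathbf{D}u)}_{i\ell}-\tfrac12 k_{i}\sum_{m}\widehat{(\mathbf{D}u)}_{mm}$, and multiplying by $ik_{j}$ yields precisely your displayed formula; the symbols $2k_{j}k_{\ell}/|k|^{2}$ and $k_{i}k_{j}/|k|^{2}$ are homogeneous of degree zero and smooth off the origin, so the periodic Mikhlin--H\"ormander (or Marcinkiewicz) theorem applies, the $k=0$ mode being absent thanks to the zero-mean normalization built into $W^{1,q}_{per}(\Omega)$. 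This is in substance the same mechanism as in the space-periodic proof in \cite{MNRR}, so your first route reconstructs the cited result rather than deviating from it. Your second route is also sound: the identity $\partial^{2}_{\ell m}u_{i}=\partial_{\ell}(\mathbf{D}u)_{im}+\partial_{m}(\mathbf{D}u)_{i\ell}-\partial_{i}(\mathbf{D}u)_{\ell m}$ is verified by expanding the right-hand side, Ne\v{c}as' negative-norm lemma gives the estimate with the lower-order term $\|u\|_{L^{q}}$, and the compactness (Peetre--Tartar) step legitimately removes it because a periodic field with $\mathbf{D}u=0$ is an affine rigid motion whose linear part must vanish by periodicity, hence $u\equiv 0$ in the mean-zero class. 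The trade-off between your two arguments is the expected one: the multiplier proof is sharper and tailor-made for the torus but leans on harmonic-analysis machinery, while the Ne\v{c}as/compactness proof generalizes to bounded Lipschitz domains at the price of a non-constructive constant.
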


\section{Assumptions and main results}
Since we  consider a variable viscosity depending on the shear rate, this  makes the problem strongly  nonlinear and
brings some complications to its analysis. To deal with this situation, it seems naturally to impose further restrictions on the structure of the stress  tensor $\mathbf{S}$.\\
We denote by $\mathrm{S}_{i j}(\mathbf{D})$ the $i j $ entry of the matrix $\mathbf{S}(\mathbf{D})$. We shall assume that  the stress tensor function $\mathbf{S}:
\mathbb{M}^{d}_{sym} \longrightarrow \mathbb{M}^{d}_{sym}$ is continuously differentiable and satisfy $\mathbf{S}(\mathbf{0})= \mathbf{0}$.\\
In addition, we suppose that, for a given fixed parameter $r\in (2, \infty),$ there are positive constants  $c_{0}, c_{1}$ and $c_{2}$, such that for all $\mathbf{B}, \mathbf{D} \in
\mathbb{M}^{d}_{sym}$
\begin{eqnarray} \label{h1}
c_{0} (1+|\mathbf{D}|)^{r-2}|\mathbf{B}|^{2} \leq \frac{\partial \mathrm{S}_{i j}(\mathbf{D})}{\partial \mathrm{D}_{k l}} \mathrm{B}_{i j} \mathrm{B}_{k l}
\leq c_{1} (1+|\mathbf{D}|)^{r-2}|\mathbf{B}|^{2},
\end{eqnarray}
\begin{eqnarray}\label{h2}
\left|\frac{\partial \mathrm{S}_{i j}(\mathbf{D})}{\partial \mathrm{D}_{ k l}}\right| \leq c_{2} (1+|\mathbf{D}|)^{r-2}.
\end{eqnarray}
It is worth noticing that the nonlinearity of the tensor $\mathbf{S}$ has some useful properties that
follows from (\ref{h1}) and (\ref{h2}).
\begin{lem}\label{consq on visco}
Let $ r \in (2,\infty)$ be a fixed real number  and  the tensor $\mathbf{S}$ satisfy the assumptions  (\ref{h1}) and  (\ref{h2}). Then, there exist positive constants $c_{3}$ and  $c_{4}$  such that, for all $\mathbf{B},\;\mathbf{D} \in
\mathbb{M}^{d}_{sym}$,
\begin{eqnarray}
\label{coercivity}\mathbf{S} (\mathbf{D}):\mathbf{D} & \geq & c_{3} (1+ |\mathbf{D}|)^{r-2} |\mathbf{D}|^{2},\\
\label{growth}|\mathbf{S} (\mathbf{D})|  & \leq & c_{4} |\mathbf{D}| (1+|\mathbf{D}|)^{r-2},\\
\label{strict monotone} [\mathbf{S}(\mathbf{B}) - \mathbf{S}(\mathbf{D})] : [\mathbf{B} -\mathbf{D}] & \geq &
c_{3} |\mathbf{B} -\mathbf{D}|^{2} ( 1 + |\mathbf{B}| + | \mathbf{D}|)^{r-2}.
\end{eqnarray}
\end{lem}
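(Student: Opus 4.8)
The plan is to derive the three inequalities (\ref{coercivity}), (\ref{growth}) and (\ref{strict monotone}) by reducing each to an integration along the segment joining $\mathbf{0}$ (or $\mathbf{D}$) to the relevant argument, and then invoking the pointwise bounds (\ref{h1}) and (\ref{h2}). The starting point is the fundamental theorem of calculus: since $\mathbf{S}$ is continuously differentiable with $\mathbf{S}(\mathbf{0})=\mathbf{0}$, for any $\mathbf{A}\in\mathbb{M}^d_{sym}$ one writes $\mathrm{S}_{ij}(\mathbf{A})=\int_0^1 \frac{\partial \mathrm{S}_{ij}}{\partial \mathrm{D}_{kl}}(t\mathbf{A})\,\mathrm{A}_{kl}\,dt$, and similarly $\mathrm{S}_{ij}(\mathbf{B})-\mathrm{S}_{ij}(\mathbf{D})=\int_0^1 \frac{\partial \mathrm{S}_{ij}}{\partial \mathrm{D}_{kl}}(\mathbf{D}+t(\mathbf{B}-\mathbf{D}))\,(\mathrm{B}_{kl}-\mathrm{D}_{kl})\,dt$ (summation over repeated indices understood).

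For the coercivity bound (\ref{coercivity}): contract the first identity with $\mathbf{D}$ to get $\mathbf{S}(\mathbf{D}):\mathbf{D}=\int_0^1 \frac{\partial \mathrm{S}_{ij}}{\partial \mathrm{D}_{kl}}(t\mathbf{D})\,\mathrm{D}_{ij}\mathrm{D}_{kl}\,dt$, and apply the lower bound in (\ref{h1}) with $\mathbf{B}=\mathbf{D}$ (and argument $t\mathbf{D}$ in place of $\mathbf{D}$), yielding $\mathbf{S}(\mathbf{D}):\mathbf{D}\geq c_0|\mathbf{D}|^2\int_0^1(1+t|\mathbf{D}|)^{r-2}\,dt$. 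Since $r>2$, one has $(1+t|\mathbf{D}|)^{r-2}\geq$ a fixed multiple of $(1+|\mathbf{D}|)^{r-2}$ on, say, $t\in[1/2,1]$ — more precisely $\int_0^1(1+t|\mathbf{D}|)^{r-2}dt \geq \tfrac12 (1+\tfrac12|\mathbf{D}|)^{r-2}\geq c(1+|\mathbf{D}|)^{r-2}$ — which gives (\ref{coercivity}) with $c_3$ depending only on $c_0$ and $r$. For the growth bound (\ref{growth}): take absolute values in the same identity and use (\ref{h2}) to get $|\mathbf{S}(\mathbf{D})|\leq c_2|\mathbf{D}|\int_0^1(1+t|\mathbf{D}|)^{r-2}dt \leq c_2|\mathbf{D}|(1+|\mathbf{D}|)^{r-2}$, where the last inequality just bounds the integrand by its value at $t=1$; set $c_4=c_2$.

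For the strict monotonicity bound (\ref{strict monotone}): contract the difference identity with $\mathbf{B}-\mathbf{D}$ to obtain $[\mathbf{S}(\mathbf{B})-\mathbf{S}(\mathbf{D})]:[\mathbf{B}-\mathbf{D}]=\int_0^1\frac{\partial \mathrm{S}_{ij}}{\partial \mathrm{D}_{kl}}(\mathbf{D}+t(\mathbf{B}-\mathbf{D}))\,(\mathrm{B}-\mathbf{D})_{ij}(\mathrm{B}-\mathbf{D})_{kl}\,dt$, then apply the lower half of (\ref{h1}) with $\mathbf{B}-\mathbf{D}$ in the role of $\mathbf{B}$ and $\mathbf{D}+t(\mathbf{B}-\mathbf{D})$ in the role of $\mathbf{D}$, getting a lower bound $c_0|\mathbf{B}-\mathbf{D}|^2\int_0^1(1+|\mathbf{D}+t(\mathbf{B}-\mathbf{D})|)^{r-2}dt$. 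The one remaining point, which I expect to be the only real obstacle, is the elementary inequality $\int_0^1(1+|\mathbf{D}+t(\mathbf{B}-\mathbf{D})|)^{r-2}dt\geq c\,(1+|\mathbf{B}|+|\mathbf{D}|)^{r-2}$ for $r>2$; this follows because the map $t\mapsto 1+|\mathbf{D}+t(\mathbf{B}-\mathbf{D})|$ is convex, hence on a subinterval of $[0,1]$ of length $\geq 1/2$ it is bounded below by a fixed fraction of $\max(1+|\mathbf{B}|,1+|\mathbf{D}|)\geq \tfrac12(1+|\mathbf{B}|+|\mathbf{D}|)$, and one raises to the power $r-2>0$ and integrates. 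Combining gives (\ref{strict monotone}) with the same constant $c_3$ as in (\ref{coercivity}), after adjusting constants. All three estimates thus reduce to the linearization-plus-pointwise-bound scheme, with the scalar integral inequalities being the only delicate bookkeeping.
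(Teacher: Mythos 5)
Your linearization scheme (fundamental theorem of calculus along the segment, then the pointwise bounds (\ref{h1})--(\ref{h2})) is exactly the standard route; the paper itself gives no proof of this lemma and simply cites Lemma 1.19 of \cite{MNRR}, which argues the same way. Your treatment of (\ref{coercivity}) and (\ref{growth}) is fine (for (\ref{growth}) the constant picks up a harmless dimensional factor from the summation over $k,l$, so it is not literally $c_{2}$, but that is cosmetic).

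The step you yourself flagged as the only delicate one is, however, justified incorrectly. Convexity of $g(t)=1+|\mathbf{D}+t(\mathbf{B}-\mathbf{D})|$ does not imply that $g$ exceeds a fixed fraction of $\max(1+|\mathbf{B}|,1+|\mathbf{D}|)$ on a subinterval of length $\geq 1/2$: convexity gives upper bounds at interior points, not lower bounds, and for this specific $g$ the claim is simply false. Take $\mathbf{B}=-\mathbf{D}$ with $|\mathbf{D}|$ large, so that $g(t)=1+|1-2t|\,|\mathbf{D}|$; every subinterval of $[0,1]$ of length $\geq 1/2$ contains $t=1/2$, where $g=1$, which is not comparable to $1+|\mathbf{D}|$. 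The integral inequality you need, $\int_{0}^{1} g(t)^{r-2}\,dt \geq c(r)\,(1+|\mathbf{B}|+|\mathbf{D}|)^{r-2}$, is nevertheless true, and the correct tool is the reverse triangle inequality rather than convexity: assume without loss of generality $|\mathbf{B}|\geq|\mathbf{D}|$ (the integral is invariant under $t\mapsto 1-t$, which swaps $\mathbf{B}$ and $\mathbf{D}$); then for $t\in[3/4,1]$ one has $|(1-t)\mathbf{D}+t\mathbf{B}| \geq t|\mathbf{B}|-(1-t)|\mathbf{D}| \geq \tfrac12|\mathbf{B}| \geq \tfrac14\left(|\mathbf{B}|+|\mathbf{D}|\right)$, hence $g(t)\geq \tfrac14\left(1+|\mathbf{B}|+|\mathbf{D}|\right)$ there, and integrating only over $[3/4,1]$ already yields the bound with $c(r)=\tfrac14\,4^{-(r-2)}$. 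With this two-line replacement your argument for (\ref{strict monotone}), and hence the whole lemma, goes through.
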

\begin{proof}
See Lemma 1.19 page 198 in \cite{MNRR}.
\end{proof}
According to this lemma, the inequalities  (\ref{coercivity}) and  (\ref{growth})  express respectively  the $r$-coercivity and the polynomial growth of the tensor $ \mathbf{S}$ while (\ref{strict monotone}) describes the strict  monotonicity  of this operator. To be more precise, one can give a typical example of stress tensors, where the above structure is satisfied, for example
\begin{eqnarray}\label{typical}
\mathbf{S}(\mathbf{D})=(\mu_{0}+\mu_{1}|\mathbf{D}|)^{r-2} \mathbf{D},
\end{eqnarray}
where $\mu_{0}$ and $\mu_{1}$ are positive constants. The models (\ref{typical}) constitute a large family of stress tensors but do not include the proposed model ($\mu |\mathbf{D}|^{r-2}\mathbf{D}$) by Man  and Sun in  \cite{mansun} and later by Man \cite{man}. The main results of our paper are summarized in the following theorem.
\begin{thm}\label{Main result}
Let $r\in [3,\infty)$ and let  the stress tensor $\mathbf{S}$ verify  the assumptions  (\ref{h1}) and (\ref{h2}).
Assume that  the forcing term $f \in L^{2}(0,T; W^{1,2}_{per}(\Omega) )$ and  that $u_{0} \in W^{2,2}_{per,div}(\Omega)$.\\
Then there exists  a strong solution  $u $ to the problem (\ref{equa mere})-(\ref{per bound}) such that
\begin{eqnarray*}
& &u \in L^{\infty}(0,T;W_{per,\:div}^{2,2}(\Omega))\cap L^{r}(0,T;W_{per,\:div}^{1,r}(\Omega)) \\
\hspace*{10mm}& & u \in \mathcal{C}_{w}([0,T];W^{2,2}_{per,div}(\Omega)) \cap \mathcal{C}([0,T];W_{per,\:div}^{1,\sigma}(\Omega)),\;\;\;\mbox{with}\;\;\; \sigma \in [0,2),\hspace*{15mm}\\
& & \partial_{t} u  \in  L^{2}(0,T;L^{2}_{per,div}(\Omega)),\;\;\mbox{ if}\;\;\; d=2,\\
& &\partial_{t} u \in  L^{2}(0,T;L^{2}_{per,div}(\Omega))\cap L^{\frac{r}{r-1}}(0,T;W_{per,div}^{1,\frac{r}{r-1}}(\Omega)),\;\; \mbox{if}\;\;\; d=3,
\end{eqnarray*}
and satisfying the following weak formulation
\begin{eqnarray}\label{weak form}
& & \int_{0}^{T}(\partial_{t} u , \varphi)  \:d\tau - \alpha_{1} \int_{0}^{T} (\partial_{t} \Delta u , \varphi) \:d\tau  + \int_{0}^{T} ((u. \nabla ) u, \varphi) \:d\tau \nonumber \\
 &+& \alpha_{1} \int_{0}^{T}((u. \nabla ) \mathbf{D}\varphi, u)  \:d\tau  +  \int_{0}^{T} ( \mathbf{S} (\mathbf{D}u),  \mathbf{D} \varphi) \: d\tau\nonumber \\
& + & \alpha_{1} \int_{0}^{T} \int_{\Omega} \sum_{j, k=1}^{d}  \partial_{k}  u_{j} \partial_{i}  u_{j} \partial_{k} \varphi _{i} \:dx \: d\tau
 = \int_{0}^{T}(f , \varphi )  \: d\tau,
\end{eqnarray}
for all $\varphi \in  L^{2}(0,T; W_{per,\:div}^{2,2}(\Omega))\cap L^{r}(0,T;W^{1,r}_{per,div}(\Omega))$. Moreover, the solution $\boldsymbol{v}$ is unique and attain the initial conditions in the following sense
\begin{eqnarray}
\underset{t \rightarrow 0^{+}}{\lim} \|\boldsymbol{v}(t) -\boldsymbol{v}_{0}\|_{W^{1,2}}=0.
\end{eqnarray}
\end{thm}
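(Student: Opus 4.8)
The plan is to construct the solution by a Galerkin scheme and pass to the limit using the monotonicity structure \eqref{strict monotone} together with the Aubin--Lions compactness of Lemma \ref{aubin lions}. First I would fix an orthonormal basis $\{\omega_k\}$ of $W^{2,2}_{per,div}(\Omega)$ consisting of eigenfunctions of the operator $u\mapsto u-\alpha_1\Delta u$ (which respects the divergence-free and zero-mean constraints), seek $u^n(t)=\sum_{k=1}^n c_k^n(t)\omega_k$ solving the projected system, and note that local existence of $c^n$ follows from the Cauchy--Peano theorem since the nonlinearities are smooth in the finitely many coefficients. The central task is then to derive \emph{a priori} estimates uniform in $n$ that are strong enough to yield a strong solution.

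The key step is the energy estimates. Testing the Galerkin equation with $u^n$ and using \eqref{diver1}--\eqref{diver3} to kill the convective and pressure-type terms, together with the coercivity \eqref{coercivity} and Korn's inequality (Lemma \ref{korn}), gives the bound of $u^n$ in $L^\infty(0,T;W^{1,2}_{per,div})\cap L^r(0,T;W^{1,r}_{per,div})$. To obtain the $W^{2,2}$ regularity claimed in the theorem I would test with $-\Delta u^n$ (equivalently, with the higher modes weighted appropriately): the viscous term produces a good term controlling second derivatives via \eqref{h1}, the term $\int \partial_k u_j\,\partial_i u_j\,\partial_k(\Delta u)_i$ and the convective contributions must be absorbed. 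Here the hypothesis $r\geq 3$ is exactly what makes the cubic gradient terms controllable — one estimates $\int |\nabla u|^2|\nabla^2 u|$ by interpolation between $L^r(W^{1,r})$ and $L^\infty(W^{1,2})$ and Young's inequality against $\int(1+|\mathbf{D}u|)^{r-2}|\nabla^2 u|^2$, closing a Gronwall inequality for $\|\nabla^2 u^n\|_{L^2}^2+\alpha_1\|\nabla\Delta u^n\|$-type quantities, using $f\in L^2(W^{1,2})$. This yields $u^n$ bounded in $L^\infty(0,T;W^{2,2}_{per,div})$. The time-derivative bounds on $\partial_t u^n$ in $L^2(L^2)$ (and in $L^{r/(r-1)}(W^{1,r/(r-1)})$ when $d=3$) follow by isolating $v^n=u^n-\alpha_1\Delta u^n$, using ellipticity of $I-\alpha_1\Delta$ and the growth bound \eqref{growth} for the remaining terms.

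The hard part is the passage to the limit in the two genuinely nonlinear terms: $\mathbf{S}(\mathbf{D}u^n)$ and the quadratic-gradient term $\sum_{j,k}\partial_k u^n_j\,\partial_i u^n_j\,\partial_k\varphi_i$. For the latter, Aubin--Lions with $X_0=W^{2,2}_{per,div}$, $X_1=W^{1,\sigma}_{per,div}$ ($\sigma<2$), $X_2=L^2$ gives $\nabla u^n\to\nabla u$ strongly in $L^2(0,T;L^2)$, so the quadratic gradient term converges (after checking equi-integrability via the uniform $L^r$-type bounds and Vitali's lemma, Lemma \ref{vitali}). For $\mathbf{S}(\mathbf{D}u^n)$ I would use the standard monotonicity (Minty--Browder) trick: from the growth bound $\mathbf{S}(\mathbf{D}u^n)\rightharpoonup\overline{\mathbf{S}}$ weakly, and then exploit \eqref{strict monotone} together with the strong convergence of $u^n$ and the convergence of the energy identity to identify $\overline{\mathbf{S}}=\mathbf{S}(\mathbf{D}u)$; in the present setting the strong $L^2$ convergence of $\mathbf{D}u^n$ actually already suffices to pass to the limit directly by continuity of $\mathbf{S}$ plus equi-integrability of $\mathbf{S}(\mathbf{D}u^n)$ (again Vitali). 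Weak continuity in time into $W^{2,2}$ and continuity into $W^{1,\sigma}$ come from the uniform bounds plus the equation for $\partial_t u$; the attainment of the initial data in the $W^{1,2}$ sense for $v=u-\alpha_1\Delta u$ follows since $v\in C([0,T];(W^{1,2})^{\ast})$ and is weakly continuous into $W^{1,2}$, combined with $\limsup\|v(t)\|_{W^{1,2}}\leq\|v_0\|_{W^{1,2}}$ from the energy inequality. Uniqueness is then established separately in Section 5 by testing the difference of two solutions against itself and invoking \eqref{strict monotone} and Gronwall's lemma, using the $d=2,3$ regularity to control the difference of the nonlinear terms.
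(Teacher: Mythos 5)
Your proposal follows essentially the same route as the paper: a Galerkin scheme with a first-level energy estimate and a second-level estimate obtained by testing with $-\Delta u^n$, absorbing the $\int |\mathbf{D}u^n|\,|\nabla^2 u^n|^2$-type terms into the weighted coercive term via Young's inequality (which is exactly where $r\ge 3$ enters), time-derivative bounds through the projection and elliptic regularity for $I-\alpha_1\Delta$, Aubin--Lions compactness plus a.e.\ convergence of $\mathbf{D}u^n$ and Vitali's lemma to identify the limit of $\mathbf{S}(\mathbf{D}u^n)$ (the paper indeed bypasses Minty--Browder precisely as you note), and a Gronwall argument for uniqueness. The minor variations (eigenfunctions of $I-\alpha_1\Delta$ versus the Stokes operator, strong convergence in $L^2(0,T;W^{1,\sigma})$ versus $L^r(0,T;W^{1,r})$) do not change the substance.
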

\begin{rem}\quad\\
\vspace{-0.7cm}
\begin{itemize}
\item[i)] The principal reason for which we have considered the flow with shear thickening behavior and not with a shear thinning one comes from the following: when getting the energy inequality, we realize that $\nabla u \in L^{r}(0,T;L^{r}_{per}(\Omega))$ and since we have in our system terms with gradients
products, namely $\nabla u (\nabla u)^{t} $ and its transpose, then to ensure the space integrability of these terms (eventually in $L^{\frac{r}{2}}_{per}(\Omega)$) and  in accordance with the definition of Sobolev spaces, the parameter $\frac{r}{2}$ must be greater than $1$, and hence  $r\geq 2$.
\item[ii)] The existence of  the pressure can be deduced, with the help of the De Rham's theorem, from the weak formulation (\ref{weak form}) and one can assert that there is some distribution $p$ such that (\ref{equa mere}) holds in $[0,T] \times \mathbb{R}^{d}$.
\end{itemize}
\end{rem}
To the best of our knowledge, the question of existence of weak or strong solutions has not been
raised for the class of generalized second grade fluids. Furthermore, this work seems to be the first to prove global existence
and uniqueness of strong solution without restrictive smallness assumptions on the initial data.\\
Let us come back to some related results concerning  classical second grade fluids whose Cauchy stress is described by the formula  (\ref{classic cauchy tensor}). In \cite{galdisequeira}, Galdi and  Sequeira proved  existence and uniqueness of global in time classical solution when the initial data is very small in a bounded regular domain of $\mathbb{R}^{3}$ and subject to Dirichlet boundary conditions. Later, Le Roux in \cite{leRoux} extended these results under  suitable regularity and growth restrictions on the initial data with nonlinear partial slip boundary conditions in a bounded simply-connected domain. We also mention  that  Cioranescu and Ouazar \cite{cioraouazar} proved, for the Dirichlet boundary conditions case, existence and uniqueness of $W^{3,2}$
solutions that are global in two   dimensions of spaces and local for small data  in three dimensions. Further, Cioranescu and Girault \cite{cioragirault} improved the three dimensional result by showing global existence under further appropriate assumptions on the data.
On the other hand, Bresch and Lemoine \cite{breschlemoine} established existence and uniqueness  of $W^{2,r}$ ($r>3$) stationary solution for three dimensional bounded domain of class $\mathcal{C}^{2}$ with smallness  restrictions on the kinematic viscosity $\mu$ and the forcing term.\\
The first results concerning  unsteady incompressible flows of Navier-Stokes equations with shear rate dependent material coefficients go back to Ladyzhenskaya \cite{ladyzhenskaya} and  Lions \cite{lions}, who proved  for $r\geq \frac{3d}{d+2}$  the existence of weak solutions by using the monotone operator theory and compactness arguments. In the last two decades,  the mathematical analysis of
fluids with shear rate dependent viscosities have known a lot of relevant works dealing with existence of weak and strong  solutions and regularity results. One can cite some leaders in this field such as  M. Bul\'{\i}\v{c}ek, E. Feireisl, J. Frehse, J. M\'{a}lek, J. Ne\v{c}as, K.R. Rajagopal, M. R\.{u}\u{z}i\v{c}ka and many others  and  we refer to \cite{BMR}, \cite{malekraja07} and \cite{malekraja} where  overviews of theses results are established.\\

In this work,  we prove  global in time existence of strong solution (in $W^{1,r},\: r \geq 3$), for spatially periodic two and three-dimensional flows and  for a large class of fluids of second grade with  shear rate dependent viscosities. Moreover, in a second part  we prove uniqueness of such solution. All our results hold  without restricting on the size of the initial data.\\
The scheme of the proof is based on the use of  Galerkin method. First, we  construct the Galerkin approximations (based on the eigenfunctions of the Stokes operator) of the velocity and derive a priori $W^{1,2}$-estimates. But this is not enough to ensure the convergence of nonlinear terms. That is why we have to look for new estimates of higher derivatives order. More precisely, we derive the crucial estimates that are sufficient to establish the compactness of the velocity gradient for the approximations. \\
Unlike the classical fluids  of third grade, where in the $W^{1,2}$ and $W^{2,2}$ a priori estimates (established in a good way), one benefits from the presence of the term $ div \,( |\mathbf{D} u|^{2} \mathbf{D} u )$ which has a regularizing effect, this term is absent from the momentum equation of the classical second grade fluids. For this reason there are no existence results of $W^{2,2}$ solutions for the last cited fluid but only  $W^{3,2}$ solutions. In our case, one takes advantage from the coercivity of the stress tensor which will  allow us to get $W^{1,r}$  a priori estimates in a first step and help us in the absorption operation in order to obtain  $W^{2,2}$ estimates.

\section{Existence of strong solutions}
\subsection{Existence of approximate solutions}
In a first step, we  start with an approximate problem, whose solutions have sufficient regularity properties. This will be done by   performing a Galerkin scheme. So, let $\{\omega^{k}\}_{k=1}^{\infty}$ be  the set consisting of the $2 \pi$-periodic eigenvectors of the Stokes operator (denoted by $\mathrm{A}$) and  $\lambda_{k}$ be the corresponding eigenvalues. Note that $\int_{\Omega} \omega^{k} \: dx = 0.$
We set the Galerkin approximations $u^{n}$ of $u$ of  the form $\displaystyle u^{n}= \sum_{k=1}^{n}c_{k}^{n}(t)\omega^{k},$
where  the coefficients $c_{k}^{n}(t)$ solve the the $n$ coupled ordinary differential equations
\begin{eqnarray} \label{gal approx}
\begin{array}{rl}\displaystyle
(\partial_{t} v^{n}, \omega^{k})& = - ( (u^{n} \, .\,\nabla) v^{n}, \omega^{k})  - ( \mathbf{S}(\mathbf{D} u^{n}),\mathbf{D} \omega^{k} )
-  \displaystyle(\sum_{j=1}^{d} v_{j} \nabla u_{j}, \omega^{k}) \\
& +\:  ( f, \omega^{k}),\hspace*{1cm} v^{n} = u^{n} - \alpha_{1} \Delta u^{n},
\end{array}
\end{eqnarray}
for all $k=1,...,n$.\\
To close the system (\ref{gal approx}), we  prescribe the initial data  as
\begin{eqnarray}\label{initial approx}
u^{n}_{0} = \mathbb{P}^{n} u_{0} = \sum_{k=1}^{n} ( u_{0}, \omega^{k})\omega^{k}  \;\;\; \text{in} \;\;\Omega,
\end{eqnarray}
where $\mathbb{P}^{n}$ denotes  the orthogonal projection operator onto the subspace $\mathbb{H}_{n}:= span \{\omega^{1},...,\omega^{n}\}$  defined by $\mathbb{P}^{n} u = \sum_{k= 1 }^{n} ( u, \omega^{k}) \omega^{k}$. Furthermore, we have
\begin{eqnarray}
\lambda_{k} (u^{n}, \omega^{k}) = (\mathrm{A}u^{n}, \omega^{k}) = (\nabla u^{n}, \nabla \omega^{k}).
\end{eqnarray}
We would like to mention that $\mathbb{P}^{n} : W^{s,2}_{per,div}(\Omega)\longrightarrow \mathbb{H}_{n}$ is uniformly continuous for all $s \in [0,3]$ ( see \cite{MNRR} or \cite{ruzicka} for a proof.)\\
Due to the continuity of the right hand side of (\ref{gal approx}), the local existence   of a solution to (\ref{gal approx})-(\ref{initial approx}) on a short time interval $(0,t^{*})$ follows from the Carath\'{e}dory theory (see \cite{zeidler} for instance). In order to extend the solution to the whole time interval $[0, T]$, we need to show that the solution remains finite for
all positive times which consequently implies that $t^{*}= T$. To achieve this goal, we  will derive some  uniform estimates.\\
Multiplying the k-th equation in (\ref{gal approx}) by $c_{k}^{n}(t)$ and  summing over $k=1,...,n$,  we obtain the following energy equality
\begin{eqnarray}\label{apr estim}
\frac{1}{2} \frac{d}{dt} (\|u^{n}\|_{L^{2}}^{2} + \alpha_{1} \|\nabla u^{n}\|_{L^{2}}^{2})
+   \int_{\Omega} \mathbf{S}(\mathbf{D}u^{n}):\mathbf{D}u^{n} \:dx  = \int_{\Omega} f \:.\:u^{n} \:dx
\end{eqnarray}
All the other nonlinear terms vanish, it is due to the incompressibility constraint $div \, \omega^{k} =0$, see the identities (\ref{diver1}) and (\ref{diver2}).\\
Now integrating  the identity (\ref{apr estim})  over time on $[0,t]$ and  using (\ref{coercivity}) and the Korn's inequality, we deduce that
\begin{eqnarray}\label{apr estim 1}
\|u^{n}(t)\|_{L^{2}}^{2} + \alpha_{1} \|\nabla u^{n}(t) \|_{L^{2}}^{2}
+ c_{3} c_{k} \int_{0}^{t} \|\nabla  u^{n}\|_{L^{r}}^{r} \:d\tau\nonumber\hspace*{4cm} \\
\leq \|u_{0}^{n}\|_{L^{2}}^{2}  + \alpha_{1}\|\nabla u_{0}^{n}\|_{L^{2}}^{2} + c(\varepsilon) \int_{0}^{t} \|f\|_{L^{r^{'}}}^{r^{'}} \: d\tau +  \varepsilon c_{s}\int_{0}^{t} \| \nabla u \|_{L^{r}}^{r} \: d\tau
\end{eqnarray}
where $c_{s}$ is the Sobolev constant of the embedding $W^{1,r}_{per}(\Omega)\hookrightarrow L^{r}_{per}(\Omega)$. \\
Choosing $\varepsilon$ small enough, we obtain the following uniform bounds  for all $t \in (0,T)$
\begin{eqnarray}
\label{bound1} \{u^{n}\} \;\;\;&\;\;\mbox{ is uniformly bounded in}\;\; &\;\;\;\;\; L^{\infty}(0,t; W^{1,2}_{per, div}(\Omega))\\
\label{bound2} \{u^{n}\} \;\;\;& \;\;\mbox{is uniformly bounded  in}\;\; &\;\;\;\;\; L^{r}(0,t; W^{1,r}_{per, div}(\Omega)).
\end{eqnarray}
From (\ref{bound1})  it follows that
\begin{eqnarray}
 | c_{k}^{n}(t)|^{2}<\infty \;\;\; \mbox{for all}\;\;\; t\in [0,T], \;\;\;\forall \;\;\; k=1,...,n.
\end{eqnarray}
Taking into account the continuity of $c_{k}^{n}$  on $[0,t^{*}[$, one can shift $t^{*}$ to $T$. Thus we realize that  there exists $u$ such that for a certain subsequence (still denoted by $\{u^{n}\}$), we have
\begin{eqnarray}
\label{30} u^{n} & \rightharpoonup & u \quad\mbox{*weakly in}\quad L^{\infty}(0,T;W^{1,2}_{per,div}(\Omega)),\\
\label{31} u^{n} & \rightharpoonup & u \quad\mbox{weakly in}\quad L^{r}(0,T;W_{per,div}^{1,r}(\Omega)).
\end{eqnarray}
Moreover, using the growth property (\ref{growth}), we infer that  the sequence $\{\mathbf{S}(\mathbf{D} u^{n})\}$ is uniformly bounded in
$L^{r^{'}}(0,T;L_{per,div}^{r^{'}}(\Omega))$ where $r^{'}=\frac{r}{r-1}$ is the conjugate exponent of $r$.
Therefore, we can deduce that, up to subsequence extraction, there exists a tensor $\boldsymbol{\chi}$ for which
\begin{eqnarray}
\label{321}\mathbf{S}(\mathbf{D} u^{n}) \;\rightharpoonup \; \boldsymbol{\chi}
\quad\mbox{weakly in}\quad L^{r^{'}}(0,T;L_{per,div}^{r^{'}}(\Omega)).
\end{eqnarray}
The main difficulty in the limiting process consists in  the terms which are  nonlinear in  $\nabla u^{n}$ and more precisely those with  product of spatial derivative of first order. So  we are urged to obtain stronger regularity results on the sequence $\{u^{n}\}$ in order to have compactness of the sequence $\{\nabla u^{n}\}$, and thus consequently the pointwise convergence. The next paragraph will focus on.
\subsection{Compactness and limiting process}
The usual method  to establish strong solutions for similar problems consists in considering   $-\Delta u^{k}$ as a test function in  the equation (\ref{gal approx}) and trying to establish  a priori estimates for the spatial derivative of second order of the velocity field. This is only allowed if  $-\Delta u^{k}$ is sufficiently regular ( say in $L^{2}_{per, div}(\Omega)$), but until now  this is not verified. Fortunately, the periodic boundary setting enable us this operation since $A u^{k} = - \Delta u^{k}$ and the task will simplify to multiplying the equation in interest by the corresponding eigenvalues $\{\lambda^{k} \}$ of the Stokes operator.\\
Multiplying the equations (\ref{gal approx}) by $\lambda_{k} c_{k}^{n}(t)$ and  summing the resulted equations over $k=1,...,n$, we obtain by means of integrations by parts
\begin{eqnarray}\label{energy eq 2}
\frac{1}{2} \frac{d}{dt} (\|\nabla u^{n}\|_{L^{2}}^{2} + \alpha_{1} \|\nabla^{2} u^{n}\|_{L^{2}}^{2})
+ (div( \mathbf{S}(\mathbf{D} u^{n})), \Delta u^{n})\nonumber \\
=   ( (u^{n} \, .\,\nabla) v^{n}, \Delta u^{n}) +  (\sum_{j=1}^{d} v_{j} \nabla u_{j}, \Delta u^{n}) -  ( f, \Delta u^{n}).
\end{eqnarray}
In the following, we will estimate appropriately the different terms appearing in (\ref{energy eq 2}). Using (\ref{h1}) and (\ref{coercivity}) we get from the stress tensor
\begin{eqnarray*}
(div( \mathbf{S}(\mathbf{D}u^{n})), \Delta u^{n}) &=& (\partial_{\mathbf{D}} \mathbf{S}(\mathbf{D}u^{n}) \mathbf{D} \nabla u^{n}, \mathbf{D}\nabla u^{n})\\
&=& \sum_{i,j,k, l=1}^{d}\int_{\Omega}\frac{\partial \mathbf{S}_{i j}(\mathbf{D} u^{n})} {\partial \mathrm{D}_{k l}}\mathrm{D}_{k l} (\nabla u^{n})
 \mathrm{D}_{i j} (\nabla u^{n})\: dx\nonumber\\
& \geq &  c_{0} \int_{\Omega} (1+|\mathbf{D}u^{n}|)^{r-2} |\mathbf{D}\nabla u^{n}|^{2} \: dx = c_{0} \mathcal{I}_{r} (u^{n})\\
& \geq &  c_{0} \int_{\Omega} |\mathbf{D} u^{n}|^{r-2} |\mathbf{D}\nabla u^{n}|^{2} \: dx + c_{0} \int_{\Omega} |\mathbf{D}\nabla u^{n}|^{2} \: dx.
\end{eqnarray*}
By Korn's inequality (\ref{korn}), we deduce that
\begin{eqnarray}\label{433}
(div( \mathbf{S}(\mathbf{D}u^{n})), \Delta u^{n}) &\geq & c_{0} \mathcal{I}_{r} (u^{n})\nonumber \\
&\geq &   c_{0} \int_{\Omega} |\mathbf{D} u^{n}|^{r-2} |\mathbf{D}\nabla u^{n}|^{2} \: dx + c_{0}c_{k} \|\nabla^{2} u^{n}\|_{L^{2}}^{2}.
\end{eqnarray}
Furthermore, we recall that the quantity
\begin{eqnarray} \label{zero}
( (u^{n}.\nabla) u^{n}, \Delta u^{n}) = \sum_{i,j, k=1}^{d} \int_{\Omega} \partial_{k} u_{j}^{n} \partial_{j} u_{i}^{n} \partial_{k} u_{i}^{n} \: dx,
\end{eqnarray}
vanishes in the two  dimensional space-periodic setting  due to the fact that  $\partial_{1} u_{1}^{n}= - \partial_{2}u_{2}^{n}$. Using integration by parts and the divergence-free condition of $u^{n}$, we see from (\ref{exp 1}) and (\ref{diver1}) that we have
\begin{eqnarray}\label{exp 11}
((u^{n} \, .\,\nabla) v^{n}, \Delta u^{n} ) &=& ((u^{n} \, .\,\nabla) u^{n}, \Delta u^{n} ) - \alpha_{1} ((u^{n} \, .\,\nabla) \Delta u^{n}, \Delta u^{n} )  \\
&=&  \sum_{i,j, k=1}^{d}\int_{\Omega} \partial_{k} u_{j}^{n} \partial_{j} u_{i}^{n} \partial_{k} u_{i}^{n} \: dx - \alpha_{1} \sum_{i, j=1}^{d}\int_{\Omega} u_{j}^{n} \partial_{j} \Delta u_{i}^{n}  \Delta u_{i}^{n}) \nonumber \\
 &= &\sum_{i,j, k=1}^{d}\int_{\Omega} \partial_{k} u_{j}^{n} \partial_{j} u_{i}^{n} \partial_{k} u_{i}^{n} \: dx  - \frac{\alpha_{1}}{2} \int_{\Omega} u_{j}^{n} \partial_{j} |\Delta u_{i}^{n}|^{2} \: dx \nonumber\\
& \leq  & \|\nabla u^{n}\|_{L^{3}}^{3} \leq c \|\nabla u^{n}\|_{L^{r}} \|\nabla^{2} u^{n}\|_{L^{2}}^{2}.
\end{eqnarray}
On the other hand, one can deduce from (\ref{exp 2}) that
\begin{eqnarray}\label{446}
(\sum_{j=1}^{d} v_{j}^{n} \nabla u_{j}^{n},  \Delta u^{n})  &=& \alpha_{1} \sum_{i, j, k=1}^{d} \int_{\Omega} \partial_{k}( \partial_{k}  u_{j}^{n} \partial_{i}  u_{j}^{n}) \Delta u_{i}^{n}\: dx\nonumber\\
&=& \alpha_{1} \sum_{i, j = 1}^{d} \int_{\Omega} \Delta u_{j}^{n} \partial_{i}  u_{j}^{n} \Delta u_{i}^{n}\: dx + \alpha_{1} \sum_{i, j, k=1}^{d} \int_{\Omega}  \partial_{k}  u_{j}^{n} \partial_{k i}  u_{j}^{n} \Delta u_{i}^{n}\: dx\nonumber\\
&=& \alpha_{1} \sum_{i, j = 1}^{d} \int_{\Omega} \Delta u_{j}^{n} \left[ \frac{1}{2} (\partial_{i}  u_{j}^{n} + \partial_{j} u_{i}^{n} )\right] \Delta u_{i}^{n}\: dx \nonumber \\
&+ &\frac{\alpha_{1}}{2} \sum_{i=1}^{d} \int_{\Omega} (\partial_{i}|\nabla u^{n}|^{2}) \Delta u_{i}^{n}\: dx\nonumber\\
& \leq &  \alpha_{1}  \int_{\Omega} |\Delta u^{n}|^{2} |\mathbf{D} u^{n}| \: dx
\leq   \alpha_{1}  \int_{\Omega} |\mathbf{D} \nabla u^{n}|^{2} |\mathbf{D} u^{n}| \: dx,
\end{eqnarray}
where in the third line of the last estimate  we have used the symmetry of the scripts $i$ and $j$.
If $r > 3$ then by means of the  Young's inequality, one have
\begin{eqnarray}\label{r3}
|\mathbf{D} u^{n}|= |\mathbf{D} u^{n}| 1 \leq \varepsilon |\mathbf{D} u^{n}|^{r-2}  + c (\varepsilon) 1^{\frac{r-2}{r-3}},
\end{eqnarray}
which yields that
\begin{eqnarray}\label{447}
(\sum_{j=1}^{d} v_{j}^{n} \nabla u_{j}^{n},  \Delta u^{n})  \leq  \alpha_{1}\varepsilon  \int_{\Omega} |\mathbf{D} \nabla u^{n}|^{2} |\mathbf{D} u^{n}|^{r-2}\: dx +  \alpha_{1} c(\varepsilon ) \int_{\Omega} |\mathbf{D} \nabla u^{n}|^{2} \: dx.
\end{eqnarray}

Putting all the estimates (\ref{433}), (\ref{446}) and  (\ref{447}) together,  the energy inequality (\ref{energy eq 2}) becomes
\begin{eqnarray}\label{energy ineq 22}
\frac{1}{2} \frac{d}{dt} \|\nabla u^{n}\|_{L^{2}}^{2} + \frac{\alpha_{1}}{2} \frac{d}{dt} \|\nabla^{2} u^{n}\|_{L^{2}}^{2}
 +   c_{0} \int_{\Omega} |\mathbf{D} u^{n}|^{r-2} |\mathbf{D}\nabla u^{n}|^{2} \: dx + c_{0}c_{k} \|\nabla^{2} u^{n}\|_{L^{2}}^{2}\nonumber \\
 \leq \alpha_{1} \varepsilon \int_{\Omega} |\mathbf{D} u^{n}|^{r-2}|\mathbf{D}\nabla u^{n}|^{2} \: dx + c(\alpha_{1},\varepsilon) (\|\nabla u^{n}\|_{L^{r}} + 1 + \|f\|_{L^{2}}) \|\nabla^{2} u^{n}\|_{L^{2}}^{2}.
\end{eqnarray}
Choosing $\varepsilon <<1 $ small enough in such a way that $\alpha_{1} \varepsilon < c_{0}$ one can absorb the first term of the right hand side of (\ref{energy ineq 22}) and find that
\begin{eqnarray}\label{energy ineq 23}
\frac{1}{2} \frac{d}{dt} \|\nabla u^{n}\|_{L^{2}}^{2} + \frac{\alpha_{1}}{2} \frac{d}{dt} \|\nabla^{2} u^{n}\|_{L^{2}}^{2}
 +   c \int_{\Omega} |\mathbf{D} u^{n}|^{r-2} |\mathbf{D}\nabla u^{n}|^{2} \: dx \nonumber \\
 + c_{0}c_{k} \|\nabla^{2} u^{n}\|_{L^{2}}^{2}  \leq \tilde{c} \|\nabla u^{n}\|_{L^{r}}  \|\nabla^{2} u^{n}\|_{L^{2}}^{2},\hspace*{2cm}
\end{eqnarray}
where $c$ and $\tilde{c} $ are  some positive fixed constants. Note that for $r=3$, we do not need to use (\ref{r3}) and inequality (\ref{energy ineq 23})  remains valid. Consequently, since $\|\nabla u^{n}\|_{L^{r}}+ \|f\|_{L^{2}} $ is uniformly bounded  in $L^{1}(0,T)$,  the Gronwall's lemma enables us to conclude for both cases $d=2$ and $d=3$ that
\begin{eqnarray}\label{second deriv bound}
\{u^{n}\} \;\; &\quad\mbox{is uniformly bounded in}\quad &\;\; L^{\infty}(0,T;W^{2,2}_{per,div}(\Omega)),
\end{eqnarray}
and that for all $n$
\begin{eqnarray}\label{control i r}
\int_{0}^{T}\int_{\Omega} (1 +|\mathbf{D}u^{n}|)^{r-2} |\mathbf{D}\nabla u^{n}|^{2} \: dx \: d\tau \leq c.
\end{eqnarray}
Thus,  we have
\begin{eqnarray}
\label{40} u^{n}&\rightharpoonup& u\quad\mbox{*weakly in}\quad L^{\infty}(0,T;W^{2,2}_{per,div}(\Omega)),\\
\label{41} u^{n}&\rightharpoonup& u\quad\mbox{weakly in}\quad L^{q}(0,T;W_{per,div}^{2,2}(\Omega)),\;\;\; \forall \; q \in [1, \infty).
\end{eqnarray}
In order to prove a compactness  for  the velocity field in some Sovolev spaces, we need uniform estimates for the time derivative $\partial_{t} u^{n}$. To do this, observe that for test function
\begin{eqnarray*}
\varphi &\in& L^{2}(0,T;W_{per,div}^{2,2}(\Omega)) \cap L^{r}(0,T;W_{per,div}^{1,r}(\Omega)),\;\;\;\mbox{if}\;\; \; d=3\\
\varphi &\in& L^{2}(0,T;W_{per,div}^{2,2}(\Omega)) ,\;\;\;\mbox{if}\;\;\; d=2,\\
\end{eqnarray*}
 we have
\begin{eqnarray}\label{time bound}
\begin{array}{rl}
((\partial_{t} v^{n} , \varphi))_{2}= ((\partial_{t} v^{n}, \mathbb{P}^{n}\varphi))_{2}
&=- ( (u^{n} \, .\,\nabla) v^{n}, \mathbb{P}^{n}\varphi)  +  ( div (\mathbf{S}(\mathbf{D} u^{n})),\mathbb{P}^{n}\varphi )\\
& \displaystyle -  (\sum_{j=1}^{d} v_{j} \nabla u_{j}, \mathbb{P}^{n} \varphi)
 +  ( f, \mathbb{P}^{n}\varphi).
\end{array}
\end{eqnarray}
Let us now estimate the right hand side of  (\ref{time bound}). Recall that the projection operator $\mathbb{P}^{n}$ is continuous from $W^{q,2}_{per,div}(\Omega)$ to $\mathbb{H}_{n}$ for all $q \in [0,3]$. Recall also the usual Sobolev embeddings \cite{adams}
\begin{eqnarray}\label{sob2}
W_{per}^{1,2}(\Omega)\hookrightarrow  L_{per}^{q}(\Omega),& &\;\;\;\forall\;q \in [1,\infty),\quad\mbox{if}\;\;\; d=2\\
\label{sob3}W_{per}^{1,2}(\Omega)\hookrightarrow L_{per}^{q}(\Omega),& &\;\;\;\forall\;q \in [1,6],\quad\mbox{if}\;\;\; d=3.
\end{eqnarray}
Moreover,  it is obvious that
\begin{eqnarray*}
\frac{2r}{r-2} \in \begin{cases}\displaystyle
(2, \infty), & \quad\mbox{if}\;\;\; r \in (2, \infty), \\
 (2, 6],& \quad\mbox{if}\;\;\; r \in [3,\infty).
  \end{cases}
\end{eqnarray*}
Therefore, by (\ref{exp 1}) and the  H\"{o}lder's inequality, we obtain
\begin{eqnarray*}
\displaystyle\int_{0}^{T}( (u^{n} \, .\,\nabla) v^{n}, \mathbb{P}^{n}\varphi) \: d \tau
& \leq & \int_{0}^{T} \|u^{n} \|_{L^{\infty}} \|\nabla u^{n} \|_{L^{2}} (\|\mathbb{P}^{n} \varphi \|_{L^{2}} + \alpha_{1} \|\nabla^{2} \mathbb{P}^{n} \varphi \|_{L^{2}}) \: d\tau \\
&+& \alpha_{1} \int_{0}^{T} \|\nabla u^{n} \|_{L^{r}} \|\nabla u^{n} \|_{L^{2}}  \|\nabla \mathbb{P}^{n} \varphi \|_{L^{\frac{2r}{r-2}}}\:d \tau\\
& \leq & c \|u^{n} \|_{L^{\infty}(W^{2,2})}  \|\nabla u^{n} \|_{L^{2}(L^{r})}  \| \mathbb{P}^{n} \varphi \|_{L^{2}( W^{2,2})}\leq c.
\end{eqnarray*}
Similarly, using (\ref{exp 2}) and taking into account the divergence-free condition, we get
\begin{eqnarray*}
\displaystyle \int_{0}^{T}(\sum_{j=1}^{d} v_{j} \nabla u_{j}, \mathbb{P}^{n} \varphi) \: d \tau
& \leq &\alpha_{1} \int_{0}^{T} \|\nabla u^{n} \|_{L^{r}} \|\nabla u^{n} \|_{L^{2}}  \|\nabla \mathbb{P}^{n} \varphi \|_{L^{\frac{2r}{r-2}}}\:d \tau\\
& \leq & c \|\nabla u^{n} \|_{L^{\infty}(L^{2})} \|u^{n} \|_{L^{2}(W^{1,r})} \| \mathbb{P}^{n} \varphi \|_{L^{2}( W^{2,2})}\leq c.
\end{eqnarray*}
To conclude, we consider now the stress tensor  term.
Thanks to the growth property (\ref{growth}) we have
\begin{eqnarray*}
\displaystyle\int_{0}^{T}(div (\mathbf{S}(\mathbf{D} u^{n})), \mathbb{P}^{n}\varphi) \: d \tau &=& \int_{0}^{T}(\mathbf{S}(\mathbf{D} u^{n}), \mathbf{D}\mathbb{P}^{n}\varphi) \: d \tau\\
&\leq & c_{4}  \int_{0}^{T} \int_{\Omega} |  (1 + |\mathbf{D} u^{n}|)^{r-1} |\mathbf{D} \mathbb{P}^{n} \varphi |\: dx  \:d \tau\\
&\leq & c \displaystyle \int_{0}^{T} \int_{\Omega}|\mathbf{D} \mathbb{P}^{n} \varphi | + |\mathbf{D} u^{n}|^{r-1} |\mathbf{D}(\mathbb{P}^{n} \varphi)|\: dx\:d \tau\\
&\leq & c \displaystyle \|\varphi \|_{L^{1}(W^{1,2})} +  \int_{0}^{T} \|\mathbf{D} u^{n}\|_{L^{r}}^{r-1}\|\mathbf{D}(\mathbb{P}^{n} \varphi)\|_{L^{r}}\:d \tau\\
&\leq & c \displaystyle \|\varphi \|_{L^{1}(W^{1,2})} +  \|u^{n}\|_{L^{r} (W^{1,r})}^{r-1}\|\varphi\|_{L^{r}(W^{1,r})}\leq c.
 \end{eqnarray*}
In dimension two we can improve, thanks to (\ref{sob2}), the last estimate as follows
\begin{eqnarray*}
\displaystyle\int_{0}^{T}(div (\mathbf{S}(\mathbf{D} u^{n})), \mathbb{P}^{n}\varphi) \: d \tau \leq c \|\varphi \|_{L^{1}(W^{1,2})} +  \|u^{n}\|_{L^{\infty} (W^{2,2})}^{r-1}\|\varphi\|_{L^{1}(W^{2,2})}\leq c.
\end{eqnarray*}
Now, denoting by $X^{2,r}=L^{2}(0,T;(W^{2,2}_{per,div}(\Omega))^{*})\cap L^{\frac{r}{r-1}}(0,T;W_{per,div}^{1,r}(\Omega)^{*})$, and keeping in mind the last estimates, we infer that
\begin{eqnarray}\label{timebound v}
\begin{array}{rl}
\{\partial_{t} v^{n}\} \;&\mbox{is uniformly bounded in} \quad L^{2}(0,T;(W^{2,2}_{per,div}(\Omega))^{*}),\;\; if\; d=2,\\
\{\partial_{t} v^{n}\} \; &\mbox{is uniformly bounded in}\quad X^{2,r} \;\; if\; d=3,
\end{array}
\end{eqnarray}
and hence by the classical elliptic regularity we deduce that
\begin{eqnarray}\label{timebound u}
\begin{array}{rl}\{\partial_{t} u^{n}\} &\;\mbox{is uniformly bounded in}\quad L^{2}(0,T;L^{2}_{per,div}(\Omega)),\; if\; d=2,\\
\{\partial_{t} u^{n}\} \; &\mbox{is uniformly bounded in}\quad  Y^{2,r}\; if\; d=3,
\end{array}
\end{eqnarray}
where $Y^{2,r}= L^{2}(0,T;L^{2}_{per,div}(\Omega))\cap L^{\frac{r}{r-1}}(0,T;W_{per,div}^{1,\frac{r}{r-1}}(\Omega))$. Consequently, we infer that
\begin{eqnarray}\label{weaklim time u}
\begin{array}{rl}\{\partial_{t} u^{n}\} &\;\rightharpoonup\;  \partial_{t} u\quad\mbox{in}\;\;\;\; L^{2}(0,T;L^{2}_{per,div}(\Omega)),\quad\mbox{if}\;\;\;d=2,\\
\{\partial_{t} u^{n}\} \;\; &\;\rightharpoonup \;\partial_{t} u\quad\mbox{in}\;\;\;\; L^{2}(0,T;L^{2}_{per,div}(\Omega))\cap L^{\frac{r}{r-1}}(0,T;W_{per,div}^{1,\frac{r}{r-1}}(\Omega))\quad\mbox{if}\;\;\; d=3.
\end{array}
\end{eqnarray}
Since the power $r$ lies in $[3,\infty)$ then $\frac{r}{r-1} \in (1,\frac{3}{2}]$. So, by the Aubin-Lions lemma \ref{aubin lions}, (\ref{bound2}) and (\ref{weaklim time u}), we deduce that
\begin{eqnarray}\label{61}
\label{35}u^{n} \rightarrow u \quad\mbox{strongly in}\quad  L^{r}(0,T;W_{per, div}^{1,r}(\Omega)),\;\;\;
\end{eqnarray}
which implies that
\begin{equation}
\label{36}u^{n}\rightarrow u \quad\mbox{almost everywhere in}\quad [0,T]\times \Omega,
\end{equation}
\begin{equation}
\label{44}\nabla u^{n}\rightarrow \nabla u \quad\mbox{almost everywhere in}\quad [0,T]\times \Omega,
\end{equation}
and by symmetry
\begin{eqnarray}
\label{448}\mathbf{D} u^{n}\rightarrow \mathbf{D} u\quad&\mbox{almost everywhere in}&\quad [0,T]\times \Omega.
\end{eqnarray}
Since the stress tensor $\mathbf{S}$ is continuous
\begin{eqnarray}
\label{449}\mathbf{S}(\mathbf{D} u^{n})\rightarrow \mathbf{S}(\mathbf{D} u) \quad&\mbox{almost everywhere in}&\quad [0,T]\times \Omega.
\end{eqnarray}
Consequently, by means of Vitali's lemma we deduce that $ \boldsymbol{\chi} = \mathbf{S}(\mathbf{D}u)$  and one have
\begin{eqnarray}
\label{500}\mathbf{S}(\mathbf{D} u^{n})\rightarrow \mathbf{S}(\mathbf{D} u) \quad &\mbox{weakly in}&\quad L^{r^{'}}(0,T;L_{per,div}^{r^{'}}(\Omega)),
\end{eqnarray}
and almost everywhere in $ [0,T]\times \Omega$.\\
Now we outline the passage to the limit in the weak formulation (\ref{weak form}). Using (\ref{timebound v}) and  (\ref{timebound u}), we can pass to the limit in the terms corresponding to the time derivative for $\Delta u$ and $u$, respectively. On the other hand,  the limiting process in the convective terms involving (\ref{exp 1}) and (\ref{exp 2}) and the stress tensor are  ensured by (\ref{61}), (\ref{35}) and (\ref{500}).\\
Finally, the  continuity property follows from the fact that $u \in L^{2}(0,T;L^{2}_{per, div}(\Omega))$ and  $\partial_{t} u \in L^{2}(0,T;L^{2}_{per,div}(\Omega))$ which implies that
$u \in \mathcal{C}([0,T];L^{2}_{per, div}(\Omega))$. To improve this regularity property, we see that for a fixed time $t_{0} \in (0,T)$ we have the following interpolation inequality
$$ \| u (t,.) - u(t_{0}, .\|_{W^{\sigma,2}} \leq c \| u (t,.) - u(t_{0}, .\|_{L^{2}}^{\frac{\sigma}{2}} \| u (t,.) - u(t_{0}, .)\|_{W^{2,2}},\;\;\; \forall \; \sigma \in [0,2),$$
and therefore we deduce   the  strong continuity in $W^{\sigma,2}_{per,div}(\Omega)$.\\
Concerning the weak continuity, it is easy to show
$$(u(t), \psi)  + \alpha_{1} (\nabla u (t) ,\nabla \psi) \underset{t \rightarrow t_{0}}{\longrightarrow}  ( u(t_{0}), \psi) + \alpha_{1} (\nabla u (t_{0}) ,\nabla \psi), $$
for\; all \;$ t_{0} \in [0,T]$ and $ \psi \in W^{2,2}_{per,div}(\Omega).$\\
To finish this section, we can easily handle the attainment of the initial data. For more details one can consult paragraph 3.10 in \cite{malekraja}.
\section{Uniqueness of strong solution}
Consider  $u$  and $\bar{u}$ (with initial data $u_{0}$ and $\bar{u}_{0}$ respectively)  two strong solutions to the
problem consisting of equations (\ref{div oh})-(\ref{per bound}) as defined in Theorem \ref{Main result}. We set $w:= u -  \bar{u}$ their difference. Subtracting
the equations   relatively to $u$ and $\bar{u}$ we obtain the following system
\begin{eqnarray} \label{equa w}
\displaystyle
\partial_{t} w - \alpha_{1}\partial_{t} \Delta w + (u \, .\,\nabla) (u -\alpha_{1} \Delta u) - (\bar{u} \, .\,\nabla) (\bar{u} -\alpha_{1} \Delta \bar{u})\hspace*{3cm}  \\
- div ( \mathbf{S}(\mathbf{D} u) -  \mathbf{S}(\mathbf{D} \bar{u}))
+ \sum_{j=1}^{d} (u_{j} - \alpha_{1} \Delta u_{j}) \nabla u_{j} - \sum_{j=1}^{d} (\bar{u}_{j} - \alpha_{1} \Delta \bar{u}_{j}) \nabla \bar{u}_{j} =0.\nonumber
\end{eqnarray}
Multiplying (\ref{equa w}) by $w$ and integrating over $\Omega$,  we obtain by (\ref{strict monotone}) the following energy inequality
\begin{eqnarray}\label{uniq}
\frac{1}{2} \frac{d}{d t} \|w(t)\|_{L^{2}}^{2} + \frac{\alpha_{1}}{2} \frac{d}{d t}\|\nabla w (t)\|_{L^{2}}^{2}
+ \int_{\Omega} (1 + |\mathbf{D} u| + |\mathbf{D} \bar{u} |)^{r-2} |\mathbf{D} u - \mathbf{D} \bar{u} |^{2}\:dx \nonumber \\
\leq - \int_{\Omega} ((u.\nabla) u- (\bar{u}.\nabla) \bar{u}) . w  \:dx - \alpha_{1}\int_{\Omega} ((u.\nabla) \mathbf{D}u  -(\bar{u}.\nabla)\mathbf{D}\bar{u})
:\mathbf{D} w \:dx \nonumber\\
 -\alpha_{1}\int_{\Omega} \sum_{j=1}^{d} (u_{j}- \alpha_{1} \Delta u _{j} ) \nabla u_{j} -  (\bar{u}_{j}- \alpha_{1} \Delta \bar{u} _{j} ) \nabla \bar{u}_{j})\: . \: w \: dx :=\mathcal{I}_{1} + \mathcal{I}_{2} + \mathcal{I}_{3}.
\end{eqnarray}
The first term of the right hand side of (\ref{uniq}) can be handled in the following way
\begin{eqnarray}
\mathcal{I}_{1} =  -\displaystyle \int_{\Omega} (w .\nabla)  u \:. \: w + (\bar{u}. \nabla)  w . w \:dx  & = & -\displaystyle \int_{\Omega} (w .\nabla)  u \:. \: w \; dx \nonumber\\
& \leq &  \| w\|_{L^{4}}^{2} \| \nabla u\|_{L^{2}}  \leq   \| \nabla u\|_{L^{2}} \|\nabla w\|_{L^{2}}^{2}.\;\;\;\;\;
\end{eqnarray}
Note that $\int_{\Omega}(\bar{u}. \nabla)  w . w \:dx = 0$ ( see (\ref{diver2})) due to the fact that $w$ is a divergence-free vector field. Thanks to (\ref{diver2}) and H\"{o}lder's inequality, we have
\begin{eqnarray*}
\mathcal{I}_{2}&=& - \alpha_{1}\int_{\Omega} ((u .\nabla) \mathbf{D}w + (w.\nabla)\mathbf{D}\bar{u})
:\mathbf{D}w \:dx \\
&=& - \alpha_{1} \int_{\Omega}  (w.\nabla)\mathbf{D}\bar{u})
:\mathbf{D}w \:dx \\
 & \leq & \alpha_{1} \int_{\Omega}  |w| |\mathbf{D}\nabla \bar{u}||\nabla w| \:dx\\
& \leq &  \alpha_{1} \int_{\Omega}  |w| ( 1 + | \mathbf{D} \bar{u}|)^{\frac{r-2}{2}}|\mathbf{D}\nabla \bar{u}| ( 1 + | \mathbf{D} \bar{u}|)^{\frac{2-r}{2}} |\nabla w| \:dx\\
&\leq&    \alpha_{1} \| w \|_{L^{\infty}} \left(\int_{\Omega}  ( 1 + | \mathbf{D} \bar{u}|)^{r-2}|\mathbf{D}\nabla \bar{u}|^{2} \:dx\right) \|\nabla w \|_{L^{2}}.
\end{eqnarray*}
Next, we deal with  the  last term in (\ref{uniq}). Using (\ref{exp 2}), we have
\begin{eqnarray}\label{exp 22}
\mathcal{I}_{3}&:= & \int_{\Omega} \sum_{j=1}^{d} (u_{j}- \alpha_{1} \Delta u _{j} ) \nabla u_{j} -  (\bar{u}_{j}- \alpha_{1} \Delta \bar{u} _{j} ) \nabla \bar{u}_{j})\: . \: w \: dx\nonumber \\
&=& -  \alpha_{1}\int_{\Omega}   [\sum_{i, j, k =1}^{d}\partial_{k}( \partial_{k}  u_{j} \partial_{i}  u_{j})- \partial_{k}( \partial_{k}  \bar{u}_{j} \partial_{i}  \bar{u}_{j})]  w_{i} \: dx\\
&=&   \alpha_{1}\int_{\Omega}   \sum_{i, j, k =1}^{d}( \partial_{k}  w_{j} \partial_{i}  u_{j} \partial_{k} w_{i}  +  \partial_{k}  \bar{u}_{j} \partial_{i}  w_{j}\partial_{k} w_{i} \: dx\nonumber \\ \;\;\;
\end{eqnarray}
By symmetry and integration by parts, we get
\begin{eqnarray}\label{exp 23}
\mathcal{I}_{3}&=&   -\alpha_{1}\int_{\Omega}   \sum_{i, j, k =1}^{d} \partial_{i k}  w_{j} \partial_{k} w_{i} u_{j} +  \partial_{i j} \bar{u}_{k}  w_{k}\partial_{j} w_{i} \: dx\nonumber \\
& \leq & \alpha_{1} \int_{\Omega} |\nabla^{2} w| |\nabla w | |u| \:dx  + \alpha_{1} \int_{\Omega} |\nabla^{2} \bar{u}| |\nabla w | |w| \:dx\nonumber \\
& \leq & \alpha_{1}  \|\nabla^{2} w\|_{L^{2}} \|\nabla w \|_{L^{2}}  \|u\|_{L^{\infty}}
 + \alpha_{1} \|\nabla^{2} \bar{u}\|_{L^{2}} \|\nabla w \|_{L^{2}}  \|w\|_{L^{\infty}}.
\end{eqnarray}
 In view of all these estimates,  we have
\begin{eqnarray}\label{bfinal}
\frac{1}{2} \frac{d}{d t} \|w(t)\|_{L^{2}}^{2} & + & \frac{\alpha_{1}}{2} \frac{d}{d t}\|\nabla w(t)\|_{L^{2}}^{2}
+ c_{1} c_{k} \|\nabla w(t)\|_{L^{2}} \nonumber\\ &\leq &  F( u, \bar{u}, w)\|\nabla w(t)\|_{L^{2}}^{2},
\end{eqnarray}
where $F(u, \bar{u}, w)$ is a function incorporating
$$\|\nabla u (t)\|_{L^{2}}^{2},\|\nabla^{2}
\bar{u}(t)\|_{L^{2}}^{2},  \|\nabla \bar{u}(t)\|_{L^{2}}^{2}, \|\nabla^{2} \bar{u}(t)\|_{L^{2}}^{2}, \|\nabla
w (t)\|_{L^{2}}^{2},\|\nabla^{2} w(t)\|_{L^{2}}^{2},$$
 and belonging to $L^{\infty}(0,T)$. Therefore,  we obtain
\begin{eqnarray}
\frac{d}{d t} \|w(t)\|_{L^{2}}^{2} +  \alpha \frac{d}{d t}\|\nabla w(t)\|_{L^{2}}^{2}
\leq  c  F( u, \bar{u}, w) (\|w(t)\|_{L^{2}}^{2} + \alpha  \|\nabla  w\|_{L^{2}}^{2}).
\end{eqnarray}
Finally, by Gronwall's inequality we infer that
\begin{eqnarray}
 \|w(t)\|_{L^{2}}= \|\nabla w(t)\|_{L^{2}}=0\;\;\;\;\text{ for almost every } t \: \in \: (0,T).
\end{eqnarray}
Consequently $u = \bar{u} $, and the proof is complete.

\section*{Acknowledgement}
We are grateful to the anonymous referee for a careful reading of the manuscript and fruitful remarks and suggestions.

\end{document}